\newcommand{\Sres}{{\operatorname{Sres}}}
\newcommand{\Res}{{\operatorname{Res}}}
\newcommand{\Syl}{{\operatorname{Syl}}}
\newcommand{\SylM}{{\operatorname{SylM}}}
\newcommand{\coeff}{{\operatorname{coeff}}}
\newcommand{\sg}{{\operatorname{sg}}}
\theoremstyle{plain}
\newtheorem{theorem}{Theorem}[section]
\newtheorem{lemma}[theorem]{Lemma}
\newtheorem{proposition}[theorem]{Proposition}
\newtheorem{definition}[theorem]{Definition}
\newtheorem{example}[theorem]{Example}
\def\N{{\mathbb N}}
\def\Z{{\mathbb Z}}
\def\K{{\mathbb K}}
\newenvironment{ac}{\noindent{\bf Acknowledgements:}}{}
\begin{document}

\title{Closed Formula for Univariate Subresultants in Multiple Roots}

\author[C. D'Andrea]{Carlos D'Andrea}
\address{Department de Matem\`atiques i Inform\`atica, Facultat de
Matem\`atiques i Inform\`atica, Universitat de Barcelona, Gran Via de les Corts
Catalanes, 585; 08007 Spain.}
\email{cdandrea@ub.edu}
\urladdr{http://www.ub.edu/arcades/cdandrea.html}

\author[T. Krick]{Teresa Krick}
\address{Departamento de Matem\'atica, Facultad de
Ciencias Exactas y Naturales  and IMAS,
CONICET, Universidad de Buenos Aires,  Argentina.}
\email{krick@dm.uba.ar}
\urladdr{http://mate.dm.uba.ar/\~\,krick}

\author[A. Szanto]{Agnes Szanto}
\address{Department of Mathematics, North Carolina State
University, Raleigh, NC 27695 USA.}
\email{aszanto@ncsu.edu}
\urladdr{www4.ncsu.edu/\~\,aszanto}

\author[M. Valdettaro]{Marcelo Valdettaro}
\address{Departamento de Matem\'atica, Facultad de
Ciencias Exactas y Naturales, Universidad de Buenos Aires, Argentina.}
\email{mvaldett@dm.uba.ar}

\begin{abstract}
We generalize Sylvester single sums  to multisets and show that these sums compute  subresultants of two univariate polynomials as a function of their roots independently of their multiplicity structure.  This is the first closed formula for subresultants in terms of roots that works for arbitrary polynomials, previous efforts only handled special cases. Our extension involves in some cases confluent Schur polynomials and is obtained by using  multivariate symmetric interpolation via an Exchange Lemma. \end{abstract}

\keywords{Subresultants, Exchange Lemma, formulas in roots, Schur functions}
\subjclass[2010]{13P15; 05E05}

\maketitle

\section{Introduction}
 Let $\K$ be a field. Given two finite sets $A,B\subset \K$ of cardinalities $m$ and $n$ respectively,  and  $0\leq d\leq m$, J.J. Sylvester introduced in \cite{sylv40b} the following  {\em single sum}:
 \begin{equation}\label{defsinglesum}
 \Syl_{d}(A,B)(x)=\sum_{{A'\subset A,
|A'|=d}} \frac{\mathcal{R}(A\backslash A',B)\mathcal{R}(x,A')}{\mathcal{R}(A',A\backslash A')},
\end{equation}
where $\mathcal{R}(X,Y):=\prod_{\substack{x\in X\\
y\in Y}}(x-y)$, with the convention that $\mathcal{R}(X,Y)=1$ if $X=\emptyset$ or $Y=\emptyset$, and $A\backslash A'$ denotes as usual the set difference.

For  $f:=f_m x^m+\cdots + f_0,\,g:=g_n x^n+\cdots + g_0\in\K[x],$  and $0\le d\le  \min\{m,n\}$ when $m\ne n$ or $0\le d<m=n,$ Sylvester also introduced in \cite{sylv39,sylv40} the {\em $d$-th order subresultant} $Sres_d(f,g)(x)\in \K[x]:$  \begin{equation*}\label{srs}
\Sres_d(f,g)(x):= \det \begin{array}{|cccccc|c}
\multicolumn{6}{c}{\scriptstyle{m+n-2d}}&\\
\cline{1-6}
f_{m} & \cdots & &\cdots &f_{d+1-(n-d-1)} &x^{n-d-1}f& \\
&  \ddots & &&\vdots  & \vdots& \scriptstyle{n-d}\\
& & f_m& \dots &f_{d+1}&f& \\
\cline{1-6}
g_{n} &\cdots & &\cdots  &g_{d+1-(m-d-1)}  &x^{m-d-1}g&\\
&\ddots && &\vdots  &\vdots  &\scriptstyle{m-d}\\
&&g_{n} & \cdots &  g_{d+1} &g&\\
\cline{1-6} \multicolumn{2}{c}{}
\end{array}.
\end{equation*}

When
\begin{align*}
f&=\prod_{a\in A}(x-a) \quad \mbox{and} \quad g=\prod_{b\in B}(x-b),\end{align*}
that is, all roots of $f$ and $g$ are simple roots,
the following relation between Sylvester single sums and subresultants was stated  in \cite{sylv40b} and then established in  \cite[Section 2]{Syl}: for $d\le \min\{m,n\}$ when $m\ne n,$ or $d< m=n$,
 \begin{equation}\label{sumasimple}\Sres_d(f,g)(x)= (-1)^{d(m-d)} \Syl_{d}(A,B)(x).
 \end{equation}
The interest for having formulas ``in roots'' for subresultants comes from the fact that it is well-known that  for $0\leq d\leq \min\{m,n\},\, \Sres_d(f,g)(x)$ is ``the'' polynomial of degree $d$ appearing in the polynomial remainder sequence starting with $f$ and $g,$ and whose last element is $\gcd(f,g).$ In the particular case $g=f',$ the derivative of $f$, the study of the variation of signs of the elements of this sequence in a given interval leads to explicit criteria for computing the number of real roots of $f$ in that interval. This is the so-called \emph{Sturm's method} which was actually the main focus of Sylvester's papers in \cite{sylv39,Syl}. These results allowed Sylvester to obtain formulas for Sturm's auxiliary functions in terms of the roots  of $f$, and these expressions  became well-known in his lifetime (see \cite[Art. 35]{Syl}). 

Note that \eqref{sumasimple} can be considered as a ``Poisson formula'' for the subresultant, generalizing the well known Poisson formula for resultants $$\Res(f,g)=\Sres_0(f,g) = \prod_{a\in A} g(a),$$ as it describes it in terms of the values of $g$ in the roots of $f$:
$$
\Sres_d(f,g)(x)= (-1)^{d(m-d)}\sum_{A'\subset A, |A'|=d}
\frac{\prod_{a\in A\backslash A'}g(a)\mathcal{R}(x,A')}{\mathcal{R}(A',A\backslash A')},
$$
but this equality only holds in the case where the roots of $f$ are all simple, i.e. when $f$ is squarefree, unlike the classical Poisson formula for resultants.

Even though there is a long history in the study of the connection between subresultants and Sylvester sums in the simple root case (see for instance \cite{Bor60,ApJo06,Cha90,Hon99,LaPr01,DTGV04,DHKS07, RoSz11,KS01,KSV16} and the references therein), little is known about extensions when the roots of $f$ and $g$ have multiplicities. As noted above, the generalization is not straightforward, since some denominators in the Sylvester sums turn zero in case of root multiplicities, despite the fact that the left hand side of \eqref{sumasimple} is well defined even in these cases. The article \cite{DKS2013} describes formulas in terms of the roots of arbitrary polynomials but only for the  order $d=1$ and $d=\min\{m, n\}-1$ subresultants, while  formulas for the subresultants of any order $d$ but only for  the extremal case when $f=(x-a)^m$ and $g=(x-b)^n$ in terms of $a$ and $b$ have been developed in \cite{BDKSV17}.

The present paper is the first one  to give expressions for  subresultants of arbitrary polynomials $f$ and $g$ and arbitrary values of $0\leq d\leq \min\{m,n\}$ that are a generalization of the classical Sylvester single sums.

To this aim,  in Definitions \ref{def-bigd} and \ref{SylM}, we present  $\SylM_{d}(A,B)(x)$, a generalization of the notion of $\Syl_{d}(A,B)(x)$ for multisets (sets where repeated elements are allowed). First, in Definition \ref{def-bigd}, we consider the case when   $A$ and $B$ are multisets and  $d$  sufficiently large. Then, in Definition \ref{SylM}, we extend our definition to any $d$ with the aid of Schur functions. Both of these definitions coincides with $\Syl_{d}(A,B)(x)$ defined in (\ref{defsinglesum}) when $A$ is a set, and  in Theorems  \ref{sub-bigd-2}  and \ref{alld2} we show  that our definitions also satisfy -as desired- Identity~\eqref{sumasimple}.

In order to state our main results, we first introduce some notation we extend from sets to multisets.
Given a multiset $X\subset \K$ , we denote with $|X|$ its length (the number of elements counted with multiplicities). If $X'\subset X$ are  multisets, then  $X\setminus X'$ is the multiset difference, defined by the elements of $X$ with  multiplicities  equal to the difference between their values in $X$ and in $X'$.

\begin{definition}\label{def-bigd} Let  $A, \, B\subset\K$  be multisets with $|A|=m,\, |B|=n$ and let $\overline A\subset A$ and $\overline B\subset B$ be fixed subsets of the sets of distinct elements in $A$ and $B$ respectively, and set  $m':=m-|\overline A|$ and $n'=n-|\overline B|$.
 For any $d$ such that $m'+n'\le d \le \min\{m,n\}$ if $m\ne n$ or $m'+n'\le d<m=n$, we define
{\small   {\begin{align*} &\SylM_{d}(A,B)(x):=\\
  &
   (-1)^{m'(m-d)}\sum_{\substack{A'\subset \overline A\\|A'|=d-m' }}
\sum_{\substack{B'\subset \overline B\\|B'|=m' }} \frac{\mathcal{R}(A\backslash \overline A, \overline B\backslash B')\mathcal{R}(\overline A\backslash A',  B\backslash B')\mathcal{R}(x,A')\mathcal{R}(x,B')}{ \mathcal{R}(A', \overline A\backslash A')\mathcal{R}(  B', \overline B\backslash B')
}.\end{align*}}}
\end{definition}
It is straightforward to verify that  when $A$ is a set and $\overline{A}:=A$, i.e.    $m'=0$, then  $\SylM_{d}(A,B)(x)$ boils down to  $\Syl_{d}(A,B)(x)$, the usual Sylvester  sum which appears in \eqref{defsinglesum}. We also note here that the definition of $\SylM_{d}(A,B)(x)$ depends on the choice of $\overline A\subset A $ and $\overline B\subset B$, but since ultimately we prove that they all agree with the subresultant independently of the choice of $\overline A $ and $\overline B$, we  do not indicate this dependence in the notation for the sake of simplicity.

We then have:
\begin{theorem}\label{sub-bigd-2}
Let $f,g\in \K[x]$  be monic polynomials of degrees $m$ and $n$, with multisets of roots $A$ and $B$. Let    $\overline A$ and $\overline B$ be subset of the sets of distinct roots of $f$ and $g$, respectively, and set $m':=m-|\overline A|$ and $ n':=n-|\overline B|$.
For any $d$ such that $m'+n'\leq d\leq\min\{m,n\}$ if $m\ne n$ or $m'+n'\leq d<m=n$, we have
  {\begin{align*} &\Sres_d(f,g)(x)= (-1)^{d(m-d)} \SylM_{d}(A,B)(x).\end{align*}}
\end{theorem}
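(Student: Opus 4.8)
The plan is to derive the identity from the classical simple-root case \eqref{sumasimple} by a specialization (deformation) argument. For a fixed multiplicity structure of $A$ and $B$, both sides of the claimed identity are polynomials in $x$ whose coefficients are rational functions over $\mathbb Q$ in the distinct roots of $f$ and $g$, and the only denominators occurring on the right-hand side are products of differences of these (automatically distinct) roots; hence it suffices to prove the identity when $\K=\mathbb C$, where an analytic limit argument is available. So fix such $f$ and $g$ over $\mathbb C$, introduce a parameter $\e$, and choose monic polynomials $f_\e$ and $g_\e$ of degrees $m$ and $n$ with root multisets $A_\e$ and $B_\e$ having pairwise distinct elements — for instance replace each distinct root $a$ of $f$ of multiplicity $\mu$ by $a,a+\e,\dots,a+(\mu-1)\e$, and likewise for $g$, so that for all small $\e\neq 0$ the elements of $A_\e$ are distinct and those of $B_\e$ are distinct. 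The elementary symmetric functions being continuous, $f_\e\to f$ and $g_\e\to g$ coefficientwise as $\e\to 0$; since $\Sres_d$ is a polynomial in the coefficients of its two arguments and $d,m,n$ do not change, $\Sres_d(f_\e,g_\e)(x)\to\Sres_d(f,g)(x)$. Applying \eqref{sumasimple} to $f_\e$ and $g_\e$ (legitimate: their roots are simple and the range for $d$ is preserved) gives $\Sres_d(f_\e,g_\e)(x)=(-1)^{d(m-d)}\Syl_{d,0}(A_\e,B_\e)(x)$, whence $\lim_{\e\to 0}\Syl_{d,0}(A_\e,B_\e)(x)$ exists and equals $(-1)^{d(m-d)}\Sres_d(f,g)(x)$. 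The theorem is therefore equivalent to the identity $\lim_{\e\to 0}\Syl_{d,0}(A_\e,B_\e)(x)=\SylM_{d,0}(A,B)(x)$, and it is this limit that we must compute.

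Evaluating the limit is the heart of the proof and the step I expect to be the main obstacle. I would split the single sum \eqref{defsinglesum} for $A_\e$ according to how the $d$-element subset $A_1$ meets each cluster of $\e$-perturbed copies of a distinct root $a_i$ of $f$: if $k_i$ of the $\mu_i$ copies of $a_i$ land in $A_1$, so that $\sum_i k_i=d$, then the contribution of that cluster to the denominator $\mathcal R(A_1,A_2)$ is of exact order $\e^{k_i(\mu_i-k_i)}$. A summand stays bounded as $\e\to 0$ precisely when each cluster lies entirely in $A_1$ or entirely in $A_2$; but the constraint $m'+n'\le d$ forces some clusters of $f$ to be split, so individual summands blow up. The key mechanism is that, after grouping together the $\binom{\mu_i}{k_i}$ summands that distribute the copies of a split cluster in all possible ways, the singular parts telescope and cancel, and the surviving regular part is a confluent divided difference — equivalently, the ratios $\mathcal R(x,A_1)/\mathcal R(A_1,A_2)$ converge to confluent Schur polynomials — so that the limit is expressed through the values of $g$ and its first few derivatives at the roots of $f$. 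Re-expanding those derivatives of $g$ through a Lagrange-interpolation identity over the distinct roots of $g$ turns the resulting expression into a double sum over a set $A'\subset\overline A$ of size $d-m'$ (the distinct roots of $f$ that stay ``simple'') and a set $B'\subset\overline B$ of size $m'$ (the roots of $g$ absorbed into the confluent interpolation), with the factors $\mathcal R(A\backslash\overline A,\overline B\backslash B')$, $\mathcal R(\overline A\backslash A',B\backslash B')$ and the sign $(-1)^{m'(m-d)}$ accounting for all remaining regular contributions. Carrying out this matching term by term — and checking that the singular cancellations and the interpolation bookkeeping are exactly as dictated by Definition~\ref{def-bigd} — is the technical core.

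A possibly cleaner way to organise the same computation is by induction on $m'+n'$: starting from \eqref{sumasimple}, collide a single nearby pair of roots of $f$ (or of $g$) at a time inside $\Syl_{d,0}$ and show that the limit is again a single sum of the same shape but for the polynomial with that one multiplicity raised by one, until one reaches the fully confluent situations handled in \cite{DKSV2016} (or a single direct Hermite-interpolation step). In either approach, once the limit has been identified with $\SylM_{d,0}(A,B)(x)$, the chain $\Sres_d(f,g)(x)=\lim_{\e\to 0}\Sres_d(f_\e,g_\e)(x)=(-1)^{d(m-d)}\lim_{\e\to 0}\Syl_{d,0}(A_\e,B_\e)(x)=(-1)^{d(m-d)}\SylM_{d,0}(A,B)(x)$ completes the proof.
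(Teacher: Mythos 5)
Your outer framework --- deform the roots by $\e$, apply \eqref{sumasimple} to the perturbed polynomials, and pass to the limit --- is exactly the shell of the paper's argument (the paper uses indeterminates $y_{i,j}, z_{i,j}$ specializing to the multiple roots rather than an analytic $\e$, but that is cosmetic). The problem is that after this reduction you are left with the identity $\lim_{\e\to 0}\Syl_{d,0}(A_\e,B_\e)(x)=\SylM_{d,0}(A,B)(x)$, which \emph{is} the entire content of the theorem, and your treatment of it is a sketch, not a proof. You correctly identify it as ``the heart of the proof and the main obstacle,'' and then assert that the singular parts ``telescope and cancel,'' that the surviving regular parts are confluent divided differences, and that a Lagrange re-expansion over the distinct roots of $g$ produces precisely the double sum of Definition~\ref{def-bigd} with the factors $\mathcal{R}(A\backslash\overline A,\overline B\backslash B')$, $\mathcal{R}(\overline A\backslash A',B\backslash B')$ and the sign $(-1)^{m'(m-d)}$. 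None of this is verified; you say yourself that carrying out the matching ``is the technical core.'' Even the preliminary claim that the bounded summands are exactly those where every cluster lies wholly in $A_1$ or in $A_2$, and that the hypothesis $m'+n'\le d$ forces splitting, is stated without justification. As it stands the proposal reduces the theorem to itself.

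The paper avoids this confluence analysis entirely by first proving a \emph{set-level} identity (Theorem~\ref{bigd}): for sets $A,B$ and arbitrary subsets $\overline A\subseteq A$, $\overline B\subseteq B$, the single sum equals the $\SylM$-type double sum whose denominators involve only differences of elements of $\overline A$ and of $\overline B$. This is obtained from the generalized Ap\'ery--Jouanolou formula (Proposition~\ref{ApeJou}) by the specific choice of auxiliary set $E=\overline A\cup\overline B$, which kills all partitions except those with $E_2\subset\overline A$ and $E_3\subset\overline B$. Once one has an expression for $\Syl_{d,0}$ whose denominators survive the collision of roots, the limit you are struggling to compute becomes a one-line continuity statement. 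If you want to rescue your direct approach you would need to actually execute the cluster-by-cluster cancellation and the Hermite/Lagrange bookkeeping; otherwise you should look for a pre-limit rewriting of the single sum, which is exactly what Proposition~\ref{ApeJou} supplies.
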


One can wonder whether the lower bound stated for  $d$ in Theorem~\ref{sub-bigd-2} is sharp, since the definition of $\SylM_{d}(A,B)(x)$ makes sense for more values of $d$, more precisely for those $d$ such that $m'\le \min\{d,|\overline B|\}$.
The next example illustrates that the result holds for $d$ in the right range and shows that the constraint  on it is necessary.

\begin{example}\label{example1}
Take $f=(x-a_1)(x-a_2)^2$ and $g=(x-b_1)^2,$ so $A=\{a_1,a_2,a_2\}$, $B=\{b_1,b_1\}$,  and we take $\overline A=\{a_1,a_2\}$ and $\overline B=\{b_1\}$. For $d=2$,  one has
 $\Sres_2(f,g)(x)= g(x)$ while   $\SylM_{2}(A,B)(x)$ equals
\begin{align*}-&\Big( \frac{(a_2-b_1)(x-a_1)(x-b_1)}{a_1-a_2}+  \frac{(a_1-b_1)(x-a_2)(x-b_1)}{a_2-a_1}\Big)\\
&\quad =\frac{\big((a_2-b_1)(x-a_1)-(a_1-b_1)(x-a_2)\big)(x-b_1)}{a_2-a_1}\\
&\quad = (x-b_1)(x-b_1)\\ & \quad =g(x),\end{align*}
so Theorem~\ref{sub-bigd-2} holds in this case, and we note that  $d=2$ is in the  range of Theorem~\ref{sub-bigd-2} since  $(3-2)+(2-1)\le 2\le \min\{3,2\}$.

Now take $f=(x-a_1)(x-a_2)^2$ and $g=(x-b_1)^3$. In this case, $A=\{a_1,a_2,a_2\}$ and $B=\{b_1,b_1,b_1\}$, and we again take  $\overline A=\{a_1,a_2\}$  and  $\overline B=\{b_1\}$. For  $d=2$ we have $\Sres_2(f,g)(x)= g(x)-f(x)$ and  $\SylM_{2}(A,B)(x)$ can still be defined according to Definition \ref{def-bigd} since $m'=1\le \min\{d,|\overline B|\}$, but it is a multiple of $x-b_1$, so the two expressions obviously do not coincide. We note that here $d=2$ is not in the range of Theorem~\ref{sub-bigd-2} since $2< (3-2)+(3-1)$.

\end{example}
To extend the definition of $\SylM_{d}(A,B)(x)$  for any  $d,$ we need to introduce confluent Schur polynomials $S_k^{(R)}(X)$, which are defined in \eqref{schg} below,   for a multiset $X$ of length $r\leq k$, by removing a subset $R$ of $k-r$ rows in the confluent Vandermonde matrix of $X$ of size $k\times r$.

\begin{definition}\label{SylM}
Let  $A, \, B\subset\K$  be multisets with $|A|=m,\, |B|=n$ and let $\overline A\subset A$ and $\overline B\subset B$ be subsets of the sets of distinct elements in $A$ and $B$ respectively, with $|\overline A|=\overline m,\,|\overline B|=\overline n$. Set $m':=m-\overline m$ and $n':=n-\overline n$. For   $0\leq d\leq\min\{m,n\}$ if $m\ne n$ or $0\leq d<m=n$, we define
  \begin{align*}&\SylM_{d}(A,B)(x):= (-1)^{m'(m-d) }  \cdot \\& \cdot  \sum (-1)^{\sigma_{ R}} \frac{\mathcal{R}(A\backslash \overline A, \overline B\backslash B')\mathcal{R}(\overline A\backslash A',B\backslash B')\mathcal{R}(x,A')\mathcal{R}(x,B')}{ \mathcal{R}(A', \overline A\backslash A')\mathcal{R}(  B', \overline B\backslash B')}\,\cdot \\
 & \hskip2cm  \cdot \,  S_{d+1}^{(\widetilde R_1)}(A'\cup B'\cup \{x\})S_{m+n-d}^{(R_2)}((\overline{A}\backslash A')\cup B)  S_{m+n-d}^{(R_3)}(A\cup (\overline{B}\backslash B')),\end{align*}
where the sum is indexed by
\begin{itemize}
\item all possible disjoint unions $R_1\sqcup R_2\sqcup  R_3=\{1,\dots,m'+n' -d\}$ with
 $R_1\subset\{m+n-2d , \dots,m'+n' -d\}$, \
$|R_1| \le d-(\overline m+\overline n )+1$,
  $m'-d\le |R_2|\le m-d$ and
 $n'-d\le |R_3|\le n-d $,
\item all subsets $A'\subset \overline A, \,|A'|=|R_2|+ d - m' $,
\item all subsets $B'\subset \overline B, \,|B'|=|R_3|+\min\{ m',d-n'\},$
\end{itemize}
 $\sigma_{ R}$  is specified in \eqref{sign2} for $R:=(R_1, R_2, R_3)$, and $\widetilde R_1:=\{i-(m+n-2d-1):\, i\in R_1\}$.
\end{definition}
It is easy to verify that this notion generalizes Definition \ref{def-bigd}, since when $m'+n'\le d$, then $m'+n'-d\le 0$  which implies that the   sets $R_1, R_2$ and $R_3$ in the sum above are empty, and $|B'|=m'$. In this way, one recovers the previous multiple sum straightforwardly. On the other hand, when $m'+n'\ge d$, we have $\min\{ m',d-n'\}=d-n'$, and one can easily check that the three Schur polynomials are well defined, i.e. the submatrices of confluent Vandermonde matrices appearing in the formula are all square.

Furthermore, when $A$ is a set and we choose $\overline A=A$, that is $m'=0$, then $R_3=\{1,\dots, n'-d\}$, $R_1=R_2=\emptyset$, $|A'|=d$ and $|B'|=0$, and one can then check that
$\SylM_{d}(A,B)(x)= \Syl_{d}(A,B)(x)$.

An additional interesting feature of our formula is that when $B$ is a set instead of $A$, and we choose $\overline B=B$, that is $n'=0$, then $R_2=\{1,\dots,m'-d\}$, $R_1=R_3=\emptyset$,  $|A'|=0$ and $|B'|=\min\{m', d\}$. In this case one can check that for $m'\ge d$, one has
\begin{align*}\SylM_{d}(A,B)(x)&= (-1)^{d(m-1)} \sum_{B'\subset
B, \,|B'|=d}
\frac{\mathcal{R}(A,B\backslash
B')\,\mathcal{R}(x,B')}{\mathcal{R}(B',B\backslash B')}\nonumber \\
& =(-1)^{mn-d} \Syl_{d}(B,A)(x).\end{align*}
When $n'=0$ and $m'<d$, one can also prove that the same  identity holds, though it is not immediate and requires applying the Exchange Lemma described in Section~\ref{s2}. In any case,
the amazing fact is  that  $\SylM_{d}(A,B)(x)$ somehow ``recognizes" when $A$ {\em or} $B$ are sets.

The main result of our paper is the following generalization of Theorem \ref{sub-bigd-2}, which shows that $\SylM_{d}(A,B)(x)$ computes the subresultant in all the cases.

\begin{theorem}\label{alld2} Let $f,g\in \K[x]$  be monic polynomials of degrees $m$ and $n$, with multisets of roots $A$ and $B$.  Let    $\overline A$ and $\overline B$ be subsets of the sets of distinct roots of $f$ and $g$, respectively, and set $m':=m-|\overline A|$ and $ n':=n-|\overline B|$. Then for  $0\leq d\leq\min\{m,n\}$ if $m\ne n$ or $0\leq d<m=n$, we have
  {\begin{align*} &\Sres_d(f,g)(x)= (-1)^{d(m-d)} \SylM_{d}(A,B)(x).\end{align*}}
\end{theorem}

We  consider again Example~\ref{example1} to illustrate how  under Definition~\ref{SylM}, Theorem~\ref{alld2} indeed holds. The right-hand side $\SylM_d(A,B)(x)$ in this example is fully developed in Example~\ref{example4}  below.

 \begin{example} \label{example2} Take  $f=(x-a_1)(x-a_2)^2$ and $g=(x-b_1)^3$  associated to the multisets  $A=\{a_1,a_2,a_2\}$ with $\overline A=\{a_1,a_2\}$, $B=\{b_1,b_1,b_1\}$ with  $\overline B=\{b_1\}$ and $d=2$.  We have  $\Sres_2(f,g)(x)= g(x)-f(x)$ while in this case $\SylM_{2}(A,B)(x)$ equals

\begin{align*}&(a_2- b_1)(x-a_1)(x-a_2) - \frac{(a_2-b_1)(a_2-b_1) (x-a_1)(x-b_1)}{a_1-a_2}- \\  & \qquad - \,
 \frac{(a_1-b_1)(a_1-b_1) (x-a_2)(x-b_1)}{a_2-a_1} .\end{align*}
It is easy to check that the two expressions coincide.
\end{example}

Next, we also obtain analogous descriptions in term of roots for the B\'ezout coefficients $F_d(f,g)(x)$ and $G_d(f,g)(x)$ that appear when expanding
$$\Sres_d(f,g)(x)=F_d(f,g)(x)f(x)+G_d(f,g)(x)g(x),$$
with
\begin{equation*}
F_d(f,g)(x):= \det \begin{array}{|cccccc|c}
\multicolumn{6}{c}{\scriptstyle{m+n-2d}}&\\
\cline{1-6}
f_{m} & \cdots & &\cdots &f_{d+1-(n-d-1)} &x^{n-d-1}& \\
&  \ddots & &&\vdots  & \vdots& \scriptstyle{n-d}\\
& & f_m& \dots &f_{d+1}&1& \\
\cline{1-6}
g_{n} &\cdots & &\cdots  &g_{d+1-(m-d-1)}  &0&\\
&\ddots && &\vdots  &\vdots  &\scriptstyle{m-d}\\
&&g_{n} & \cdots &  g_{d+1} &0&\\
\cline{1-6} \multicolumn{2}{c}{}
\end{array}
\end{equation*}
and
\begin{equation*}
G_d(f,g)(x):= \det \begin{array}{|cccccc|c}
\multicolumn{6}{c}{\scriptstyle{m+n-2d}}&\\
\cline{1-6}
f_{m} & \cdots & &\cdots &f_{d+1-(n-d-1)} &0& \\
&  \ddots & &&\vdots  & \vdots& \scriptstyle{n-d}\\
& & f_m& \dots &f_{d+1}&0& \\
\cline{1-6}
g_{n} &\cdots & &\cdots  &g_{d+1-(m-d-1)}  &x^{m-d-1}&\\
&\ddots && &\vdots  &\vdots  &\scriptstyle{m-d}\\
&&g_{n} & \cdots &  g_{d+1} &1&\\
\cline{1-6} \multicolumn{2}{c}{}
\end{array}.
\end{equation*}
 Our new formulations  extend the following formulas in case of simple roots that already appear in
\cite[Art. 29]{Syl}  (see also \cite[Cor. 3.10]{KSV16}) for $0\le d< \min\{m,n\}$:
\begin{align}\label{Gdsets} F_{d}(f,g)(x)&=  (-1)^{m-d} \sum_{B'\subset B, |B'|=d+1}
\frac{\mathcal{R}(A,B\backslash B')\mathcal{R}(x,B\backslash B')}{\mathcal{R}(B',B\backslash B')},\nonumber \\
G_d(f,g)(x)&=(-1)^{d(m-d-1)}
  \sum_{ A^{\prime }\subset A,
|A^{\prime}|=d+1}\frac{\mathcal{R}(A\backslash A^{\prime
}, B) \mathcal{R}(x,A\backslash A^{\prime
})}{\mathcal{R}( A^{\prime},A\backslash A^{\prime })} .\end{align}

Our results are based on Lemma \ref{lemmaGd}, where we relate the  B\'ezout coefficients associated to $f$ and $g$ to
principal subresultants of bivariate polynomials in $\K[x,y]$ obtained from $f(y)$ and $g(y)$ by adding the variable $x$ to their roots. To our knowledge, this is the first result expressing the B\'ezout coefficients as special cases of subresultants.  This lemma allows us to use the results of  Theorems  \ref{sub-bigd-2} and \ref{alld2}  to get formulas for $F_d$ and $G_d$.

\begin{theorem}\label{Fd-bigd-2}
Let $f,g\in \K[x]$  be monic polynomials of degrees $m$ and $n$, with multisets of roots $A$ and $B$. Let $\overline A$ and $\overline B$ be subsets of the sets of distinct roots of $f$ and $g$, respectively, with $\overline m=|\overline A|$ and $\overline n=|\overline B|$, and  set $m':=m-\overline m$ and $ n':=n-\overline n$.
For any $d$ such that $m'+n'\leq d < \min\{m,n\}$, we have
  \small{\begin{align*} &F_d(f,g)(x)=\\
  & (-1)^{m-d+n'(\overline m-1)}\sum_{\substack{A'\subset \overline A\\|A'|=n' }}
\sum_{\substack{B'\subset \overline B\\|B'|=d+1-n' }} \frac{\mathcal{R}(A\backslash \overline A, \overline B\backslash B')\mathcal{R}(\overline A\backslash A',  B\backslash B')\mathcal{R}(x,\overline B\backslash B')}{ \mathcal{R}(A', \overline A\backslash A')\mathcal{R}(  B', \overline B\backslash B'),
}\\
&G_d(f,g)(x)=\\
&(-1)^{(d-m')(m-d-1)} \sum_{\substack{A'\subset \overline A\\|A'|=d+1-m' }}
\sum_{\substack{B'\subset \overline B\\|B'|=m' }} \frac{\mathcal{R}(\overline A\backslash A', B\backslash \overline B)\mathcal{R}( A\backslash A',  \overline B\backslash B')\mathcal{R}(x,\overline A\backslash A')}{ \mathcal{R}(A', \overline A \backslash A')\mathcal{R}(  B', \overline B\backslash B')
}.\end{align*}}x
\end{theorem}
These identities are particular cases of Theorem~\ref{Fd-alld}, which deals with   any value of $0\le d<\min\{m,n\}$.

\medskip
The paper is organized as follows: in Section \ref{s2} we describe the main ingredient in our proofs, Proposition~\ref{ApeJou}, which is a generalization of a result by F. Ap\'ery and J.-P. Jouanolou. In Section \ref{s3}, we apply this tool to justify the definition of $\SylM_{d}(A,B)(x)$ and show its connection with the subresultant.  For the sake of clarity, we first present our results for the case $d$ big enough and then in the following subsection,  we recall the definition  of confluent Schur polynomial and extend our definition and result to arbitrary values of $d$. Section~\ref{Fd} presents the formulas for the B\'ezout coefficients
$F_d(f,g)(x)$ and $G_d(f,g)(x)$.  We conclude in  Section~\ref{s4} by comparing our results with previous literature in the topic. The Appendix at the end contains the technical proofs of statements made in  Sections \ref{s2}, \ref{s3} and \ref{Fd}.

\medskip
A Maple code \cite{maple} computing the formulas described in Theorems \ref{bigd} and \ref{smalld} is freely available at\\
{\tt http://cms.dm.uba.ar/Members/mvaldettaro/code.mw} \\
This code has been used for computing most of the examples which illustrate this paper.

\bigskip
\begin{ac} We thank the referee for suggesting to give more motivating examples and reorganizing the paper.  We also would like to thank the Department  of Mathematics at the University of  Buenos Aires for their hospitality, where the work on this project started during a visit of Carlos D'Andrea and Agnes Szanto in 2015. We would also like to thank the Department de Matem\`atiques i Inform\`atica  at the University of Barcelona for hosting Marcelo Valdettaro to work on this project, which is part of his PhD thesis \cite{Val17}, in July 2016. Carlos D'Andrea acknowledges financial support from the Spanish Ministry of Economy and Competitiveness, through the ``Mar\'ia de Maeztu'' Programme for Units of Excellence in R \& D (MDM-2014-0445) and the Research Project MTM2013-40775-P,
Teresa Krick and Marcelo Valdettaro were partially supported by  CONICET PIP-11220130100073 and UBACyT 2014-2017-20020130100143BA, and Agnes Szanto was partially supported by NSF grants CCF-1217557 and CCF-1813340. D'Andrea, Krick and Valdettaro were also partially supported by ANPCyT PICT-2013-0294. \end{ac}

\section{A generalization of a result by Ap\'ery \& Jouanolou}\label{s2}

This section deals with a generalization of a result by Ap\'ery and Jouanolou that appears in~\cite[Prop.91]{ApJo06}, which is quite surprising and seems of independent interest. No multisets are involved here.

\begin{proposition}\label{ApeJou} Let $A,B\subset \K$ be finite sets with $|A|=m$, $|B|=n$. Set $0\le d\le m$.  Let $X$ be a set of variables  and  $E\subset \K$ be any finite set satisfying
$$|E|\ge \max\{ |X|+d, m+n-d, m\}.$$ Then
\begin{align*}&\sum_{\substack{A_1\sqcup A_2=A\\
|A_1|=d,\,|A_2|=m-d}} \frac{\mathcal{R}(A_2,B)\mathcal{R}(X,A_1)}{\mathcal{R}(A_1,A_2)} = \\ &\qquad \qquad =\sum_{\substack{E_1\sqcup E_2\sqcup E_3 = E\\
|E_1|=d,|E_2|=m-d,|E_3|=|E|-m}}\frac{\mathcal{R}(A,E_3)\mathcal{R}(E_2,B)\mathcal{R}(X,E_1)}
{\mathcal{R}(E_1,E_2)\mathcal{R}(E_1,E_3)\mathcal{R}(E_2,E_3)}. \end{align*}
\end{proposition}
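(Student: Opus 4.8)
The plan is to prove the identity by viewing both sides as rational functions, but more efficiently: I would show that the left-hand side is \emph{independent of the auxiliary set $E$ in a suitable sense}, and that the right-hand side is a symmetrization of the left over $E$ that collapses back to it. Concretely, the first step is to recognize the right-hand side as an instance of the left-hand side applied to a \emph{larger} set. Observe that the right-hand summand can be reorganized: fixing $E_3$, the inner sum over $E_1\sqcup E_2 = E\setminus E_3$ with $|E_1|=d$ is precisely $\Syl_{d,0}$-type data for the set $E\setminus E_3$ against $B$, evaluated at $X$, up to the extra factors $\mathcal{R}(A,E_3)$ and the denominators $\mathcal{R}(E_1,E_3)\mathcal{R}(E_2,E_3)$. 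So the key is a two-step reduction: first eliminate $E_3$, then eliminate the remaining split.

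For the elimination of $E_3$, I would use a partial-fractions / residue argument, or better, the classical fact (this is essentially what Apéry--Jouanolou's Prop.~91 does in the base case) that for a finite set $A$ of size $m$ and any finite $E$ with $|E|\ge m$,
\begin{equation*}
\sum_{\substack{E_3\subset E\\ |E_3| = |E|-m}} \frac{\mathcal{R}(A,E_3)\,\Phi(E\setminus E_3)}{\mathcal{R}(E\setminus E_3,\,E_3)} = \Phi(A)
\end{equation*}
whenever $\Phi$ is a symmetric function of an $m$-element set of total degree low enough (here $\Phi$ will be the left-hand side viewed as a function of the "$A$-slot", which is symmetric and has controlled degree — this is exactly why the hypothesis $|E|\ge m+n-d$ and $|E|\ge |X|+d$ appears, to keep the degree of $\Phi$ below the interpolation threshold). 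Applying this with $\Phi(Y) := \sum_{Y_1\sqcup Y_2 = Y,\,|Y_1|=d}\frac{\mathcal{R}(Y_2,B)\mathcal{R}(X,Y_1)}{\mathcal{R}(Y_1,Y_2)}$ turns the triple sum on the right (with $E_3$ summed out) into $\Phi(A)$, which is exactly the left-hand side. To justify that this $\Phi$ is a polynomial in the entries of $Y$ of the correct degree, I would invoke the known polynomiality of Sylvester single sums (the denominators cancel — this is implicit in \eqref{defsinglesum}--\eqref{sumasimple} since the single sum equals a subresultant), and count degrees: $\Phi$ has degree $\le n$ in each variable coming from the $\mathcal{R}(Y_2,B)$ factor and degree $\le d$ from $\mathcal{R}(X,Y_1)$, consistent with the stated bounds on $|E|$.

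The main obstacle I anticipate is the degree bookkeeping needed to legitimately apply the symmetric-interpolation/elimination identity: one must check that $\Phi$, as a symmetric polynomial in an $m$-element multiset, has degree in each slot strictly less than $|E| - (\text{number of other points})$ so that the interpolation formula for $\Phi(A)$ is exact and not merely a truncation. This is where all three lower bounds on $|E|$ get used simultaneously, and getting the inequalities to line up — especially reconciling the "$|X|+d$" bound (controlling the $\mathcal{R}(X,E_1)$ degree) with the "$m+n-d$" bound (controlling $\mathcal{R}(E_2,B)$) — is the delicate point. A clean way around the bookkeeping is to instead prove the identity first over the field of rational functions $\K(A,B,X,E)$ where all points are distinct generic indeterminates, establish it there by the residue/partial-fraction computation on $E_3$ followed by the single-sum collapse, and then specialize; since both sides are polynomial in $A, B, X$ (no spurious denominators survive), specialization to the given $\K$-values is valid. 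I would structure the final write-up along these generic lines, with the Apéry--Jouanolou base case \cite[Prop.~91]{ApJo06} quoted for the $E_3$-elimination and the polynomiality of $\Syl_{d,0}$ quoted for the collapse.
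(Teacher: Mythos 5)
Your reorganization of the right-hand side is correct and yields a genuinely different, in fact more direct, proof than the paper's. Since $\mathcal{R}(E_1,E_3)\mathcal{R}(E_2,E_3)=\mathcal{R}(E\setminus E_3,E_3)$, the right-hand side is exactly the symmetric Lagrange expansion (Proposition~\ref{Lagr}, with $E'$ playing the role of $E_3$ and the evaluation point being $A$) of the symmetric function $\Phi(Y):=\sum_{Y_1\sqcup Y_2=Y,\ |Y_1|=d}\mathcal{R}(Y_2,B)\mathcal{R}(X,Y_1)/\mathcal{R}(Y_1,Y_2)$ over the node set $E$; one application of that formula in the $A$-slot gives $\Phi(A)$, which is the left-hand side. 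The paper never isolates $\Phi$: it instead applies Lemma~\ref{exchsymII} twice, first trading the splitting $E_2\sqcup E_3$ of $E\setminus E_1$ for a splitting of $A$ (with $B$ serving as the variable set), then collapsing the outer sum over $E_1$ to $\mathcal{R}(X,A_1)$. Both arguments ultimately rest on Proposition~\ref{Lagr}; yours buys the cleaner conceptual statement that the $E$-sum is literally an interpolation expansion of the single sum, at the price of having to verify directly that $\Phi$ is a symmetric polynomial in $Y$ of per-variable degree at most $|E|-m$.

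That verification is the one place your write-up needs repair. The count ``degree $\le n$ from $\mathcal{R}(Y_2,B)$ and $\le d$ from $\mathcal{R}(X,Y_1)$'' ignores the denominator $\mathcal{R}(Y_1,Y_2)$ and conflates degrees in $Y$ with degrees in $X$; a bound of $n$ per variable would require $|E|\ge m+n$, which is not among the hypotheses. The correct bookkeeping is: a term has degree $n$ over degree $d$ in a variable of $Y_2$, and degree $|X|$ over degree $m-d$ in a variable of $Y_1$, so after the poles cancel $\deg_y\Phi\le\max\{\,n-d,\ |X|-(m-d)\,\}$, which is $\le |E|-m$ precisely because $|E|\ge m+n-d$ and $|E|\ge |X|+d$ (and $|E|\ge m$ guarantees $|E_3|\ge 0$). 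Note also that polynomiality of $\Phi$ for a general variable set $X$ does not follow from identifying the single sum with a subresultant, since that identification concerns $|X|=1$ and, via Lemma~\ref{exchsymII}, only covers $|X|\le m+n-2d$; use instead the standard residue-cancellation argument pairing the two splittings that exchange $y_i$ and $y_j$. Finally, \cite[Prop.~91]{ApJo06} is a special case of the statement being proved, not a tool for the $E_3$-elimination; the tool you actually need there is Proposition~\ref{Lagr} itself.
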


The original result in \cite[Prop.91]{ApJo06} states that for $|E|=m+n-d$ and $d\le \min\{m,n\}$ for $m\ne n$ or $d<m=n$, one has
\begin{align*}\Sres_d(f,g)(x)= \sum_{\substack{E_1\sqcup E_2\sqcup E_3 = E\\
|E_1|=d,|E_2|=m-d,|E_3|=n-d}}\frac{\mathcal{R}(E_3,A)\mathcal{R}(E_2,B)\mathcal{R}(x,E_1)}
{\mathcal{R}(E_2,E_1)\mathcal{R}(E_3,E_1)\mathcal{R}(E_3,E_2)}.\end{align*}
This is a particular case of our result by~\eqref{sumasimple} and the definition of the  sum~\eqref{defsinglesum}.

We illustrate the result with a toy example, which shows how the symmetric interpolation developed by W.Y. Chen and J.D. Louck \cite{CL96} (or see  Proposition~\ref{Lagr} below)   applies here, and leave its technical proof to Section~\ref{s51} in the Appendix.

\begin{example} Take $A=\{a_1,a_2\}, \, B=\{b\}$, and $d=1$.  For $X=\{x\}$ we have
\begin{align*}\sum_{\substack{A_1\sqcup A_2=A\\
|A_1|=1,\,|A_2|=1}} \frac{\mathcal{R}(A_2,B)\mathcal{R}(x,A_1)}{\mathcal{R}(A_1,A_2)} & = \frac{(a_1-b)(x-a_2)}{a_2-a_1} + \frac{(a_2-b)(x-a_1)}{a_1-a_2}\\
&= -(x-b).\end{align*}
On the other side, for $E=\{e_1,e_2\}$  we have
\begin{align*}&\sum_{\substack{E_1\sqcup E_2\sqcup E_3 = E\\
|E_1|=1,|E_2|=1,|E_3|=0}}\frac{\mathcal{R}(A,E_3)\mathcal{R}(E_2,B)\mathcal{R}(x,E_1)}
{\mathcal{R}(E_1,E_2)\mathcal{R}(E_1,E_3)\mathcal{R}(E_2,E_3)}\\ & \qquad =
\frac{(e_1-b)(x-e_2)}{e_2-e_1} + \frac{(e_2-b)(x-e_1)}{e_1-e_2}\ = \ -(x-b)\ \mbox{ as well},
\end{align*}
which is obvious in this case, or can be easily checked for instance by Lagrange interpolation in $e_1$ and $e_2$. This is an example of Ap\'ery and Jouanolou's result.\\
Now consider $X=\{x_1,x_2\}$, $E=\{e_1,e_2,e_3\} $, and $A$, $B$ as above. On one hand we have
\begin{align*}\sum_{\substack{A_1\sqcup A_2=A\\
|A_1|=1,\,|A_2|=1}} &\frac{\mathcal{R}(A_2,B)\mathcal{R}(X,A_1)}{\mathcal{R}(A_1,A_2)}\\ & = \frac{(a_1-b)(x_1-a_2)(x_2-a_2)}{a_2-a_1} + \frac{(a_2-b)(x_1-a_1)(x_2-a_1)}{a_1-a_2}\\
&=-x_1x_2+b(x_1+x_2)+a_1a_2-b(a_1+a_2) \ =: \ f(x_1,x_2).\end{align*}
On the other hand, for $E=\{e_1,e_2,e_3\}$,
$$\sum_{\substack{E_1\sqcup E_2\sqcup E_3 = E\\
|E_1|=1,|E_2|=1,|E_3|=1}}\frac{\mathcal{R}(A,E_3)\mathcal{R}(E_2,B)\mathcal{R}(X,E_1)}
{\mathcal{R}(E_1,E_2)\mathcal{R}(E_1,E_3)\mathcal{R}(E_2,E_3)},$$
is a symmetric polynomial $g$ in 2 variables  of multidegree bounded by 1, which, by symmetric interpolation (see Proposition~\ref{Lagr} below), is determined by its value on all subsets of size 2 of $E=\{e_1,e_2,e_3\}$.
Let us check that the  symmetric polynomial $g$ agrees with the above symmetric polynomial $f$ in all subsets of size 2 of $E$: for $1\le i<j\le 3$, one has
 \begin{align*} g(e_i,e_j)&= \frac{(a_1-e_i)(a_2-e_i)(e_j-b)}{e_j-e_i}+ \frac{(a_1-e_j)(a_2-e_j)(e_i-b)}{e_i-e_j}\\ &= -e_ie_j+b(e_i+e_j)+a_1a_2-b(a_1+a_2)= f(e_i, e_j).\end{align*}
 Thus, $f=g$.\\
Finally, since the two bivariate polynomials $f$ and $g$ coincide, their leading coefficient with respect to $x_2$ also coincide, which implies that
\begin{align*}\sum_{\substack{A_1\sqcup A_2=A\\
|A_1|=1,\,|A_2|=1}} &\frac{\mathcal{R}(A_2,B)\mathcal{R}(x_1,A_1)}{\mathcal{R}(A_1,A_2)} \\ = &\sum_{\substack{E_1\sqcup E_2\sqcup E_3 = E\\
|E_1|=1,|E_2|=1,|E_3|=1}}\frac{\mathcal{R}(A,E_3)\mathcal{R}(E_2,B)\mathcal{R}(x_1,E_1)}
{\mathcal{R}(E_1,E_2)\mathcal{R}(E_1,E_3)\mathcal{R}(E_2,E_3)}\end{align*}
as well. This means that the equality also holds for one variable $x$, which is an example of a case where $|E|>\max\{|X|+d,m+n-d,m\}$.

\end{example}

A prominent consequence of Proposition~\ref{ApeJou} is that the sum in its right-hand side, since it coincides with the sum in the left-hand side,   does not depend on the particular choice of the set $E$,  as soon as it is large enough, but only on the sets $A$ and $B$. This enables us to compute it by any suitable specialization of the set $E$.
\section{Application to subresultants}\label{s3}

This section is devoted to motivate Definitions~\ref{def-bigd} and \ref{SylM}, and prove Theorems~\ref{sub-bigd-2} and \ref{alld2} of the introduction. This will be done via Theorems~\ref{bigd} and \ref{smalld} below, where $A$ and $B$ are assumed to be sets instead of multisets, and $\overline A$, $\overline B$ are arbitrary subsets of $A$, $B$ respectively.  Proposition~\ref{ApeJou}, which can be interpreted as a multivariate version of $\Syl_{d}(A,B)(x)$ by means of an arbitrary auxiliary set $E$ (where only the size of $E$ matters),  allows us to specialize $E$ on sets in such a way that the denominators only depend on these $\overline A$ and $\overline B$. Then, in the proofs of Theorems ~\ref{sub-bigd-2} and \ref{alld2}, we let the elements of $A$ or $B$ collide, and our  formulas remain well defined as long as we assume that the elements of $\overline A $ and $\overline B$ are all distinct.

We start with the easier case of  $d$ sufficiently large to be in the range of Definition~\ref{def-bigd}.

\subsection{The case of $d$ sufficiently large} {\ }

\begin{theorem} \label{bigd} Let $A,B\subset \K$  be sets with $|A|=m$ and $|B|=n$.  Let $\overline{A}\subseteq A$ and $\overline{B}\subseteq B$ be any non-empty  subsets of $A$ and $B$ respectively,  with $|\overline{A}|=\overline{m},\,|\overline{B}|=\overline{n},$ and set $m':=m-\overline{m},\, n':=n-\overline{n}$. Assume that $d$ satisfies  $m'+n'\leq d\leq\min\{m,n\},$  and let $X$ be a set of variables with $|X|\le m+n-2d$. Then  { \begin{align*} &\sum_{\substack{A_1\sqcup A_2 = A\\
|A_1|=d,\,|A_2|=m-d}}\frac{\mathcal{R}(A_2,B)\mathcal{R}(X,A_1)}{\mathcal{R}(A_1,A_2)}= (-1)^{m'(m-d)}\,\cdot \\& \ \ \cdot \sum_{\substack{A'\subset \overline A\\|A'|=d-m' }}
\sum_{\substack{B'\subset \overline B\\|B'|=m' }} \frac{\mathcal{R}(A\backslash \overline A, \overline B\backslash B')\mathcal{R}(\overline A\backslash A', B\backslash B')\mathcal{R}(X,A')\mathcal{R}(X,B')}{ \mathcal{R}(A', \overline A\backslash A')\mathcal{R}(  B', \overline B\backslash B')
}.\end{align*}}

\end{theorem}

\begin{proof}
We first assume that $A\cap B=\emptyset$. By Corollary~\ref{ApeJou} applied to $E:=\overline A\cup \overline B$, with  $|E|=\overline m + \overline n\ge m+n-d$ by assumption, we have
\[\sum_{\substack{A_1\sqcup A_2 = A\\
|A_1|=d,\\|A_2|=m-d}}\frac{\mathcal{R}(A_2,B)\mathcal{R}(X,A_1)}{\mathcal{R}(A_1,A_2)}=\sum_{\substack{E_1\sqcup E_2\sqcup E_3 = \overline A \cup \overline B\\
 |E_1|=d, |E_2|=m-d\\
 |E_3|=\overline m + \overline n-m}}\frac{\mathcal{R}(A,E_3)\mathcal{R}(E_2,B)\mathcal{R}(X,E_1)}{\mathcal{R}(E_1,E_2)
 \mathcal{R}(E_1,E_3)
 \mathcal{R}(E_2,E_3)}.\]
Now, if  $A\cap E_3\neq \emptyset$ then $ \mathcal{R}(A,E_3)=0 $  and if $E_2\cap B\neq\emptyset$ then $\mathcal{R}(E_2,B)=0$. Therefore, in each non-zero summand on the right hand side we have  $E_3\subset \overline B$ and $E_2\subset \overline A$. Setting $A'=\overline A \backslash E_2$ and $B'=\overline B\backslash E_3$, we get that $E_3=\overline B \backslash B'$, $E_2=\overline A\backslash A'$ and $E_1=A'\cup B' $, and therefore we can rewrite the right hand side as
{\small{\begin{align*}\label{inicio} &\sum_{\substack{A'\subset \overline A\\|A'|=d-m' }}
\sum_{\substack{B'\subset \overline B\\|B'|=m' }} \frac{\mathcal{R}(A,\overline B\backslash B')\mathcal{R}(\overline A\backslash A',B)\mathcal{R}(X,A')\mathcal{R}(X,B')}{ \mathcal{R}(A'\cup B', \overline A\backslash A')\mathcal{R}( A'\cup B', \overline B\backslash B')\mathcal{R}(\overline A\backslash A', \overline B\backslash B')
}\\ \nonumber
& \quad
= \sum_{\substack{A'\subset \overline A\\|A'|=d-m' }}
\sum_{\substack{B'\subset \overline B\\|B'|=m' }} \frac{\mathcal{R}(A,\overline B\backslash B')\mathcal{R}(\overline A\backslash A',B)\mathcal{R}(X,A')\mathcal{R}(X,B')}{ \mathcal{R}(A', \overline A\backslash A')\mathcal{R}( B', \overline A\backslash A')\mathcal{R}(\overline A , \overline B\backslash B')\mathcal{R}( B', \overline B\backslash B')
}
\\ \nonumber
& \quad
= (-1)^{|B'|\cdot |\overline A\backslash A'|}\sum_{\substack{A'\subset \overline A\\|A'|=d-m' }}
\sum_{\substack{B'\subset \overline B\\|B'|=m' }} \frac{\mathcal{R}(A\backslash \overline A, \overline B\backslash B')\mathcal{R}(\overline A\backslash A',B\backslash B')\mathcal{R}(X,A')\mathcal{R}(X,B')}{ \mathcal{R}(A', \overline A\backslash A')\mathcal{R}(  B', \overline B\backslash B')
}\end{align*}}}
as desired, since $|B'|\cdot |\overline A\backslash A'|= m'(m-d)$.

The general statement follows from the fact that the two expressions generically coincide.
\end{proof}

We note that the right-hand side of the equality in Theorem~\ref{bigd}  makes sense even when $A,B$  are multisets instead of sets, for one only needs $\overline A$, $\overline B$ to be sets. For $X=\{x\}$ we can then define, as stated in Definition~\ref{def-bigd},  the notion of  Sylvester sum for multisets $A$ and $B$ and $d$ within the bounds of Theorem~\ref{bigd}, which extends the usual notion of Sylvester  sums for sets.

\begin{proof}[Proof of Theorem \ref{sub-bigd-2}] Since neither $\SylM_d(A,B)$  nor $\Sres_d(f,g)$ depends on the ordering of the elements in $A$ and $B$, we can assume without loss of generality that the distinct elements of $\overline A$ and $\overline B$ appear as the first $\overline m$ and $\overline n$ elements of $A$ and $B$, respectively.   Define sets of indeterminates $Y=\{ y_1, \ldots, y_m\}$  and $Z=\{z_1,, \ldots, x_n\}$, and set $f^Y=\prod_{i=1}^m(x-y_i)$ and $g^Z=\prod_{i=1}^n (x-z_i)$. Then if we set  $\overline Y:=\{y_1, \ldots, y_{\overline m}\}\subset Y$ and $\overline Z =\{z_1, \ldots, z_{\overline n}\}\subseteq Z$ and $m'+n''\le d\leq\min\{m,n\}$ if $m\ne n$ or $m'+n'\leq d<m=n$, according to Theorem~\ref{bigd} we have
{\small\begin{align*} &\Syl_{d}(Y,Z)(x)= \\&(-1)^{m'(m-d)}\!\! \sum_{\substack{Y'\subset \overline Y\\|Y'|=d-m' }}
\sum_{\substack{Z'\subset \overline Z\\|Z'|=m' }} \frac{\mathcal{R}(Y\backslash \overline Y, \overline Z\backslash Z')\mathcal{R}(\overline Y\backslash Y', Z\backslash Z')\mathcal{R}(x,Y')\mathcal{R}(x,Z')}{ \mathcal{R}(Y', \overline Y\backslash Y')\mathcal{R}(  Z', \overline Z\backslash Z')
}.\end{align*}}
On the other hand, by~\eqref{sumasimple},
$\Sres_d(f^Y,g^Z)(x)= (-1)^{d(m-d)}\Syl_{d}(Y,Z)(x).$
Therefore, for $d$ within the stated bounds,
{\small \begin{align*} &\Sres_d(f^Y,g^Z)(x)=\\& (-1)^{(d-m')(m-d)}\!\! \sum_{\substack{Y'\subset \overline Y\\|Y'|=d-m' }}
\sum_{\substack{Z'\subset \overline Z\\|Z'|=m' }} \frac{\mathcal{R}(Y\backslash \overline Y, \overline Z\backslash Z')\mathcal{R}(\overline Y\backslash Y', Z\backslash Z')\mathcal{R}(x,Y')\mathcal{R}(x,Z')}{ \mathcal{R}(Y', \overline Y\backslash Y')\mathcal{R}(  Z', \overline Z\backslash Z')
}.\end{align*}}
We end the proof by setting $y_{1}\to a_1,\dots, y_{\overline m}\to a_{\overline m}, \ldots, y_m\to a_m$, $z_{1}\to b_1,\dots, z_{\overline n}\to b_{\overline n},\ldots, z_n\to b_n$  noting that both sides of the equality are well-defined after this specialization.
\end{proof}

%\begin{observation} \label{ForGd} Theorem~\ref{sub-bigd-2} still holds, with exactly the same proof,  for any choice of {\em sets} $\overline A$ and $\overline B$ of the multisets $A$ and $B$, as soon as their cardinalities satisfy the required bound.
%\end{observation}

\subsection{The general case.}\label{s44}

In order to deal with the situation where $0\le d<m'+n'$ we need to recall the definition of Schur polynomials. Given a partition $$\lambda = (\lambda_1, \lambda_2, \dots,\lambda_r), \ \lambda_i\in \Z_{\ge 0}\mbox{ for } 1\le i\le r, \mbox{ with } \lambda_1\ge \lambda_2\ge \cdots \ge \lambda_r,$$  the Schur polynomial
$s_\lambda(X)$ for a set $X=\{x_1,\dots,x_r\}$  is defined as the ratio

$$s_\lambda(X)= \frac{\det \left(\begin{array}{cccc}x_1^{\lambda_1+r-1}&x_2^{\lambda_1+r-1}& \cdots & x_r^{\lambda_1+r-1}\\x_1^{\lambda_2+r-2}&x_2^{\lambda_2+r-2}& \cdots & x_r^{\lambda_2+r-2} \\
\vdots & \vdots & \ddots & \vdots \\
x_1^{\lambda_r}&x_2^{\lambda_r}& \cdots & x_r^{\lambda_r}
\end{array}\right)}{\det\left(\begin{array}{ccc}x_1^{r-1}& \dots & x_r^{r-1}\\
\vdots & & \vdots \\
1& \dots & 1\end{array}\right)}.$$

That is, Schur polynomials are ratios of subdeterminants of Vandermonde matrices, where in the numerator some rows of a regular Vandermonde matrix are deleted, while in the denominator  a regular Vandermonde matrix occurs. Note that Schur polynomials are symmetric in $x_1,\dots,x_r$, and thus it makes sense to write
$s_\lambda(X)$ for a set $X$.
For convenience here, we will not follow this usual notation for Schur polynomials given by partitions but introduce a notation with a set of exponents as follows: for $k,r\in\N,\,k\geq r,$ we set
$$V_k(X)=\left(\begin{array}{ccc}x_1^{k-1} & \dots & x_r^{k-1}\\
\vdots & & \vdots\\1& \dots & 1\end{array}\right)$$ to be  the regular rectangular  Vandermonde matrix of size $k\times r.$ When $k=r$ we write $V(X)$ for simplicity. For a subset of row indexes $R=\{i_1,\dots,i_{k-r}\}\subset \{1,\dots, k\},$  we will denote by $V_k^{(R)}(X)$ the square submatrix of $V_k(X)$ obtained by removing the rows  indexed by $R.$ We then define
\begin{equation}\label{schur}S_k^{(R)}(X):=\frac{\det(V_k^{(R)}(X))}{\det(V(X))},\end{equation}
that is $S_k^{(R)}(X)$ is the Schur polynomial  associated to the set of indexes $\{1,\ldots, k\}\setminus R.$

In a more general setting, if $X=\{\underbrace{x_1,\dots,x_{1}}_{j_1},\dots,\underbrace{x_{\overline r},\dots,x_{\overline r}}_{j_{\overline r}}\}$ is a multiset  with $r=j_1+\cdots + j_{\overline r}$, we define a generalized or confluent Vandermonde matrix  instead of the regular Vandermonde matrix of size $k\times r$ as (c.f.  \cite{Kal1984})
$$V_k(X)=\left(\begin{array}{ccc}
 V_{k}(x_1,j_1) & \dots &V_{k}(x_{\overline r},j_{\overline r})
\end{array}\right)$$
where for any $j$,
 $V_{k}(x_i, j)$ of size $k\times j$ is defined by
$$
V_{k}(x_i, j):=\left(\begin{array}{ccccc}
x_i^{k-1} & (k-1)x_i^{k-2} &(k-1)(k-2)x_i^{k-3}&\dots & \frac{(k-1)!}{(k-j )!}x_i^{k-j}\\
\vdots & \vdots& \vdots& &\vdots\\
x^2_i&2x_i &2&\dots & 0 \\
x_i&1 &0&\dots & 0 \\
1&0 &0&\dots & 0
\end{array}\right),$$
where when $k=r$ one writes again $V(X)$ for simplicity. It is known  that $V(X)$ is invertible when $x_i\ne x_j$ for $i\ne j$.

Then one can define  confluent Schur polynomials in the same way  as before:
let $R=\{i_1,\dots,i_{k-r}\}\subset \{1,\dots, k\}$  be a subset of the row indexes, then we will denote by $V_k^{(R)}(X)$ the square submatrix of $V_k(X)$ obtained by removing from it the rows  indexed by $R$,  and define
\begin{equation}\label{schg}
S_k^{(R)}(X):=\frac{\det(V_k^{(R)}(X))}{\det(V(X))}.
\end{equation}

Note that in principle $S_k^{(R)}(X)$ is  a rational function, and it may not be defined over fields of positive small characteristic.  The next result shows that it is actually a polynomial, and hence its definition can be done over any field $\K$.
\begin{lemma}\label{limit}
$S_k^{(R)}(X)$ is a symmetric polynomial in the $X$-variables with coefficients in $\K.$
\end{lemma}
\begin{proof}
When $X$ is a set instead of being a multiset, the Schur function defined in~\eqref{schg} coincides with the Schur polynomial  defined in~\eqref{schur}, so the claim obviously holds in this situation.
To prove the statement in the general case, consider a set $X=\{x_{1,1},\dots, x_{1,j_1},\dots, x_{\overline r, 1},\dots, x_{\overline r, i_{\overline r}}\}$ which will ``converge''  to a  multiset $Y$ by setting $x_{1,i}\to y_1$ for $1\le i\le j_1$, \dots, $x_{\overline r,i}\to y_{\overline r}$ for $ 1\le i\le j_{\overline r}$. Then
$$S_k^{(R)}(X) \to S_k^{(R)}(Y)$$
as it can be seen for instance by computing the limits for   $x_{1,2}\to x_{1,1}$ by L'H\^opital rule,   for $x_{i,3}\to x_{i,1}$ if necessary, and repeating the same for the other terms $x_{k,2}\to x_{k,1}$, etc.
This shows that  $S_k^{(R)}(Y)\in\K[Y].$
\end{proof}

For a given  (increasingly ordered) set  $R\subset\{1,\ldots, r\}$, we set
$\sg_r(R):=(-1)^\sigma
$, where $\sigma$ is a number of transpositions needed to move this set  to the first positions in $\{1,\dots, r\}$, i.e. if $R=\{i_1,\dots i_{s}\}$ with $1\le i_1<\cdots < i_{s}\le r$, then
$\sigma $ is the parity of the number of transpositions needed to bring $(1,\dots,r)$ to $(i_1,\dots i_{s}, \dots)$, without changing the   relative order of the other elements.

Also, for a given partition $\{1,\ldots, r\}=R_1\sqcup R_2\sqcup \ldots\sqcup R_\ell$, with  $R:=(R_1, \ldots R_\ell)$ we denote $\sg({ R})=(-1)^\sigma,$ where $\sigma$ is  the parity of the number of transpositions needed to bring the ordered set
$(R_1,\ldots, R_\ell)$ (we assume that each of them is also increasingly ordered) to $\{1,\ldots, r\}.$

\begin{theorem} \label{smalld} Let $A,B\subset \K$  be sets with $|A|=m$ and $|B|=n$.  Let $\overline{A}\subseteq A$ and $\overline{B}\subseteq B$ be any non-empty subsets of $A$ and $B$ respectively, with $|\overline{A}|=\overline{m}$ and $|\overline{B}|=\overline{n}$ and set $m':=m-\overline{m}$ and $n':=n-\overline{n}$. Assume that $0\le d\le \min\{m,n\}$ if $m\ne n$ or  $0\le d<m=n$ satisfies in addition $d<m'+n'$.  Then:

 (1) If $0\le d<\overline m+\overline n$ then

\begin{align*}&\sum_{\substack{A_1\sqcup A_2 = A\\
|A_1|=d,\\|A_2|=m-d}}\frac{\mathcal{R}(A_2,B)\mathcal{R}(x,A_1)}{\mathcal{R}(A_1,A_2)}\,=(-1)^{m'(m-d)}\,\sum_{\substack{R_2\sqcup R_3=\{1,\dots, m' + n' -d \}\\
|R_2|=r_2,\, m'-d\le r_2\le m-d\\ |R_3|=r_3,\,n'-d\le r_3\le n-d}}(-1)^{\sigma_{R}}\ \cdot \\[1mm]
& \qquad \cdot \sum_{\substack{A'\subset \overline A\\|A'|=r_2-(m' -d)}}
\sum_{\substack{B'\subset \overline B\\|B'|=r_3-(n' -d)}}    \frac{\mathcal{R}(A\backslash \overline A, \overline B\backslash B')\mathcal{R}(\overline A\backslash A',B\backslash B')\mathcal{R}(x,A')\mathcal{R}(x,B')}{ \mathcal{R}(A', \overline A\backslash A')\mathcal{R}(  B', \overline B\backslash B')}\,\cdot \\
 & \hskip5cm \cdot S_{m+n-d}^{(R_2)}((\overline{A}\backslash A')\cup B) S_{m+n-d}^{(R_3)}(A\cup (\overline{B}\backslash B')),\end{align*}
 where for the partition $R_2\sqcup R_3=\{1,\dots, m'+n'-d\}$ and $R=(R_2, R_3)$  \begin{equation*}(-1)^{\sigma_{ R}}:=(-1)^{  r_2(\overline m -1) + r_3(m' +n' -d-1) +r_2r_3}\sg({R}).\end{equation*}
(2)  If $\overline m+\overline n\le d<m'+n'$,
{\small \begin{align*}&\sum_{\substack{A_1\sqcup A_2 = A\\
|A_1|=d,\\|A_2|=m-d}}\frac{\mathcal{R}(A_2,B)\mathcal{R}(x,A_1)}{\mathcal{R}(A_1,A_2)}\, = \,(-1)^{m'(m-d)}
 \sum_{\substack{R_1\sqcup R_2\sqcup  R_3=\{1,\dots,m'+n'-d\}\\
 R_1\subset\{m+n-2d , \dots,m'+n' -d\}, \\| R_1|=r_1, \, r_1\le d-(\overline m+\overline n )+1\\ |R_2|=r_2, \,m'-d \le r_2\le m-d\\ |R_3|=r_3,\, n'-d\le r_3\le n-d }} (-1)^{\sigma_{ R}}\,\cdot \\[1mm]
 & \qquad  \cdot  \sum_{\substack{A'\subset \overline A\\|A'|=r_2-(m' -d)}}
\sum_{\substack{B'\subset \overline B\\|B'|=r_3-(n' -d)}}  \frac{\mathcal{R}(A\backslash \overline A, \overline B\backslash B')\mathcal{R}(\overline A\backslash A',B\backslash B')\mathcal{R}(x,A')\mathcal{R}(x,B')}{ \mathcal{R}(A', \overline A\backslash A')\mathcal{R}(  B', \overline B\backslash B')}\,\cdot \\
 &  \hskip3cm  \cdot  S_{d+1}^{(\widetilde R_1)}(A'\cup B'\cup x)S_{m+n-d}^{(R_2)}((\overline{A}\backslash A')\cup B)S_{m+n-d}^{(R_3)}(A\cup (\overline{B}\backslash B')),\end{align*}}
where  $\widetilde R_1:=\{i-(m+n-2d-1):\, i\in R_1\},$ and
for the partition $R_1\sqcup R_2\sqcup R_3=\{1,\dots, m'+n'-d\}$ and $R=(R_1, R_2, R_3)$   \begin{equation}\label{sign2}(-1)^{\sigma_{ R}}:=(-1)^{  r_1(n-d+r_2+r_3) + r_2(\overline m -1) + r_3(m' +n' -d-1) +r_2r_3}\sg({ R}).\end{equation}
\end{theorem}

We leave the proof of Theorem~\ref{smalld} to Section~\ref{s52} in the Appendix. Here we illustrate this theorem by working out the full details the case corresponding to Example~\ref{example2} of the introduction.

\begin{example} \label{example4} Let $A=\{a_1,a_2,a_3\},\,\overline A=\{a_1,a_2\}$, $B=\{b_1,b_2,b_3\},\,\overline B=\{b_1\}$, and $d=2$. Set $f=(x-a_1)(x-a_2)(x-a_3)$ and $g=(x-b_1)(x-b_2)(x-b_3)$.
On one side we have
\begin{align*}&\sum_{\substack{A_1\sqcup A_2 = A\\
|A_1|=2,|A_2|=1}}  \frac{\mathcal{R}(A_2,B)\mathcal{R}(x,A_1)}{\mathcal{R}(A_1,A_2)} \ = \ \frac{g(a_1)(x-a_2)(x-a_3)}{(a_2-a_1)(a_3-a_1)} \,+ \\& \qquad  +\, \frac{g(a_2)(x-a_1)(x-a_3)}{(a_1-a_2)(a_3-a_2)} + \frac{g(a_3)(x-a_1)(x-a_2)}{(a_1-a_3)(a_2-a_3)} \, = \, g(x)-f(x),\end{align*}
since, by Lagrange interpolation, these two polynomials of degree $\le 2$  agree on $a_1$, $a_2$ and $a_3$. \\
Now, since $d\le \overline m +\overline n$ in this case, we need to compute the right-hand side in (1) above.
First, $m'(m-d)=1$. Also,  $R_2\sqcup R_3=\{1\}$. For $R_2=\{1\}$, $r_2=1$, $|A'|=1$ , and $R_3=\emptyset$,  $r_3=0$,  $|B'|=0$, $(-1)^{\sigma_R}=-1$, and one can check that $ S_{4}^{(1)}(\emptyset \cup B) =1$ and $ S_{4}^{(\emptyset)}(A\cup \{b_1\})=1$. This gives the term $$(a_3- b_1)(x-a_1)(x-a_2).$$
Similarly, for $r_2=0$ and $r_3=1$, $|A'|=1$ and $|B'|=1$. The computation gives two terms depending on $A'=\{a_1\}$ and $A'=\{a_2\}$:
$$- \frac{(a_2-b_2)(a_2-b_3) (x-a_1)(x-b_1)}{a_1-a_2}
 - \frac{(a_1-b_2)(a_1-b_3) (x-a_2)(x-b_1)}{a_2-a_1}.$$
 So the final sum equals
\begin{align*}&(a_3- b_1)(x-a_1)(x-a_2)- \frac{(a_2-b_2)(a_2-b_3) (x-a_1)(x-b_1)}{a_1-a_2}-\\  & \qquad - \, \frac{(a_1-b_2)(a_1-b_3) (x-a_2)(x-b_1)}{a_2-a_1}.\end{align*}
One can easily verify that this expression coincides with $g(x)-f(x)$  by interpolating in $a_1,a_2$ and $b_1$ for instance. The expression in Example~\ref{example2} is obtained by setting $a_2= a_3$ and $b_1=b_2=b_3$.
\end{example}

We are ready now to conclude the proof of  Theorem \ref{alld2}.

\begin{proof}[Proof of Theorem \ref{alld2}]
First we  note that $\SylM_{d}(A,B)(x)$ introduced in Definition~\ref{SylM} not only  generalizes Definition~\ref{def-bigd} as mentioned in the introduction, but also generalizes the  term in the right-hand side of Theorem~\ref{smalld}(1) for sets, since when  $d<\overline m + \overline n$, $R_1\subset \{ m+n-2d,\dots, m'+n'-d\} =\emptyset $. Therefore, thanks to Identity~\eqref{sumasimple}, Theorems~\ref{bigd} and \ref{smalld}, one has that the following equality holds for sets $A$ and $B$, any subsets $\overline A\subset A$ and $\overline B\subset B$ and any $0\le d\le \min\{m,n\}$ if $m\ne n$ or $0\le d<m=n$:
$$\Sres_d(f,g)(x)=(-1)^{d(m-d)} \SylM_{d}(A,B)(x).$$
The transition from sets to multisets is then straightforward by taking limits of sets to multisets, as in the proof of Theorem~\ref{sub-bigd-2}, thanks to Lemma~\ref{limit} and its proof, since  both quantities are  well-defined for multisets.
\end{proof}

%\begin{observation} \label{ForGd-2} Theorem~\ref{alld2} still holds, with exactly the same proof,  for any choice of {\em sets} $\overline A$ and $\overline B$ of the multisets $A$ and $B$.
%\end{observation}

\section{Application to the B\'ezout coefficients}\label{Fd}

We first show  an interesting new connection between the B\'ezout coefficients $F_d(f,g)(x)$ and $G_d(f,g)(x)$ and  the order $d+1$ principal subresultants of bivariate polynomials obtained from  $f$ and $g$.

\begin{lemma}\label{lemmaGd} Let $f,g\in \K[x]$  be  polynomials of degrees $m$ and $n$, respectively, and define  $\tilde f=f(y)\cdot (y-x)\in \K(x)[y]$ and $\tilde g=g(y)\cdot (y-x)\in \K(x)[y]$. Then for any $0\le d<\min\{m,n\}$, we have
\begin{align*}
F_d(f,g)(x)&=(-1)^{m-d}\coeff_{y^{d+1}}\big(\Sres_{d+1}(\tilde f, g(y))\big),\\
G_d(f,g)(x)&=\coeff_{y^{d+1}}\big(\Sres_{d+1}(f(y),\tilde g)\big).\end{align*}
\end{lemma}
\begin{proof} We first consider the case when $f$ and $g$ have distinct roots $A$ and $B$.
By Identity~\eqref{Gdsets} we have
\begin{align*}G_d(f,g)(x)&=(-1)^{d(m-d-1)}
  \sum_{ A^{\prime }\subset A,
|A^{\prime}|=d+1}\frac{\mathcal{R}(A\backslash A^{\prime
}, B) \mathcal{R}(x,A\backslash A^{\prime
})}{\mathcal{R}( A^{\prime},A\backslash A^{\prime })} \\
&= (-1)^{(d+1)(m-d-1)}
  \sum_{ A^{\prime }\subset A,
|A^{\prime}|=d+1}\frac{\mathcal{R}(A\backslash A^{\prime
}, B\cup \{x\} )}{\mathcal{R}( A^{\prime},A\backslash A^{\prime })} \\
&=(-1)^{(d+1)(m-d-1)}\coeff_{y^{d+1}}\Big(\sum_{ A^{\prime }\subset A,
|A^{\prime}|=d+1}\frac{\mathcal{R}(A\backslash A^{\prime
}, B\cup \{x\} )\mathcal{R}(y, A')}{\mathcal{R}( A^{\prime},A\backslash A^{\prime })}\Big)\\
&= (-1)^{(d+1)(m-d-1)}\coeff_{y^{d+1}}\big(\Syl_{d+1}(A,B\cup\{x\})(y)\big)\\&=
\coeff_{y^{d+1}}\big(\Sres_{d+1}(f(y),\tilde g)\big)
\end{align*}
and
\begin{align*}
F_d(f,g)(x)&=(-1)^{(m-d)(n-d)} G_d(g,f)(x)\\
&= (-1)^{(m-d)(n-d)}\coeff_{y^{d+1}}\Big(\Sres_{d+1}(g(y),\tilde f)\Big)\\&=
(-1)^{m-d} \coeff_{y^{d+1}}\Big(\Sres_{d+1}(\tilde f,g(y))\Big).
\end{align*}
The identities for arbitrary polynomials $f$ and $g$ follow by continuity, since all expressions are well-defined in case of multiple roots.
\end{proof}
Now we are ready to state our general expressions  for the B\'ezout coefficients, generalizing Theorem \ref{Fd-bigd-2} in the Introduction.
\begin{theorem}\label{Fd-alld}
Let $f,g\in \K[x]$  be monic polynomials of degrees $m$ and $n$, with multisets of roots $A$ and $B$,  and $\overline A$ and $\overline B$ are subsets of the sets of distinct roots of $f$ and $g$,  respectively, with $\overline m=|\overline A|$ and $\overline n=|\overline B|$, and set $m':=m-\overline m$ and $ n':=n-\overline n$.
For any $d$ such that $0\leq d < \min\{m,n\}$, we have
\begin{align*}&F_d(f,g)(x)=(-1)^{m-d +(n-d-1)(m-n')} \sum_{\substack{R_1\sqcup R_2\sqcup R_3=\{1,\dots, m' + n' -d \}\\
R_1\subset \{m+n-2d,\dots,m'+n'- d\},\\|R_1|=r_1, r_1\le d+2-(\overline m+\overline n)\\|R_2|=r_2,\, n'-d-1\le r_2\le n-d-1\\ |R_3|=r_3,\,m'-d\le r_3\le m-d}}(-1)^{\overline \sigma_{R}}\ \cdot \\[1mm]
& \qquad \cdot \sum_{\substack{A'\subset \overline A\\|A'|=r_3+\min\{n',d-m'\}}}
\sum_{\substack{B'\subset \overline B\\|B'|=r_2-(n' -d-1)}}
   \frac{\mathcal{R}(B\backslash \overline B, \overline A\backslash A')\mathcal{R}(\overline B\backslash B',A\backslash A')\mathcal{R}(x,\overline B\backslash B')}{ \mathcal{R}(A', \overline A\backslash A')\mathcal{R}(  B', \overline B\backslash B')}\,\cdot \\
 & \hskip2cm \cdot S_{d+1}^{(\widetilde R_1)}( A'\cup B')  S_{m+n-d}^{(R_2)}(A\cup (\overline{B}\backslash B')\cup \{x\}) S_{m+n-d}^{(R_3)}( (\overline{A}\backslash A')\cup B),\\
 \quad \\
 &G_d(f,g)(x)=(-1)^{(m-d-1)(d-m')} \sum_{\substack{R_1\sqcup R_2\sqcup R_3=\{1,\dots, m' + n' -d \}\\
R_1\subset \{m+n-2d,\dots,m'+n'- d\},\\|R_1|=r_1, r_1\le d+2-(\overline m+\overline n)\\|R_2|=r_2,\, m'-d-1\le r_2\le m-d-1\\ |R_3|=r_3,\,n'-d\le r_3\le n-d}}(-1)^{\widetilde \sigma_{R}}\ \cdot \\[1mm]
& \qquad \cdot \sum_{\substack{A'\subset \overline A\\|A'|=r_2-(m' -d-1)}}
\sum_{\substack{B'\subset \overline B\\|B'|=r_3+\min\{m',d-n'\}}}    \frac{\mathcal{R}(A\backslash \overline A, \overline B\backslash B')\mathcal{R}(\overline A\backslash A',B\backslash B')\mathcal{R}(x,\overline A\backslash A')}{ \mathcal{R}(A', \overline A\backslash A')\mathcal{R}(  B', \overline B\backslash B')}\,\cdot \\
 & \hskip2cm \cdot S_{d+1}^{(\widetilde R_1)}( A'\cup B')  S_{m+n-d}^{(R_2)}((\overline{A}\backslash A')\cup B\cup \{x\}) S_{m+n-d}^{(R_3)}(A\cup (\overline{B}\backslash B')),\end{align*}
 where
 \begin{align*}(-1)^{\overline \sigma_R}&:=(-1)^{  r_1(m-d+r_2+r_3) + r_2 \overline n  + r_3(m' +n' -d-1) +r_2r_3}\sg({R}),\\
 (-1)^{\widetilde \sigma_R}&:=(-1)^{  r_1(n-d+r_2+r_3) + r_2 \overline m  + r_3(m' +n' -d-1) +r_2r_3}\sg({R})
 \end{align*}
  and $$\widetilde R_1:=\{i-(m+n-2d-2):i\in R_1\}.$$
\end{theorem}

Again, we illustrate this result with a  toy example, leaving the proof to Section~\ref{s53} in the Appendix. We do not treat here  the case $f=(x-a_1)(x-a_2)^2$ and $g=(x-b)^3$ as we did for $\Sres_2(f,g)$ in the introduction, because in this case $F_2(f,g)=-1$ and $G_2(f,g)=1$ which do not have a lot of interest.
%
% \begin{example} \label{example6} Take  $f=(x-a_1)(x-a_2)^2$ and $g=(x-b)^3$.    We have  $\Sres_2(f,g)(x)= -f(x)+g(x)$ which means that $$F_2(f,g)(x)=-1\ \mbox{ and } \ G_2(f,g)(x)=1.$$ We now compute the expressions at the right-hand side in Theorem~\ref{Fd-alld}.\\
% For the first expression, corresponding to $F_2(f,g)(x)$, we have $R_1=R_2=\emptyset, \, R_3=\{1\}$. This gives $A'=\overline A$ and $B'=\overline B$. We then get that the sum equals $1$ and the initial sign equals $-1$. That is, the right-hand side equals $-1$, as expected.\\
% For the second expression, corresponding to $G_2(f,g)(x)$, we also have   $R_1=R_2=\emptyset, \, R_3=\{1\}$. This gives again  $A'=\overline A$ and $B'=\overline B$. We  then get that the sum equals $1$ while the initial sign also equals $1$. That is, the right-hand side equals $1$, as expected.
%\end{example}

\begin{example}\label{example7}
Take  $f=(x-a)^3$ and $g=(x-b)^2$.    For $d=1$, we easily compute that   $$F_1(f,g)(x)= 1 \ \mbox{ and } \ G_1(f,g)(x)=-x+3a-b.$$ We now compute the expressions at the right-hand side in Theorem~\ref{Fd-alld}.\\
 For the first expression, corresponding to $F_1(f,g)(x)$, we have $R_1=R_2=\emptyset, \, R_3=\{1,2\}$. This gives $A'=\overline A$ and $B'=\overline B$. We then get that the sum equals $1$ and the initial sign equals $1$. That is, the right-hand side equals $1$, as expected.\\
 For the second expression, corresponding to $G_1(f,g)(x)$, the initial sign equals $-1$. We have    $R_1=\emptyset$, and  $ R_2=\{1\}$, $R_3=\{2\}$ or $ R_2=\{2\}$, $R_3=\{1\}$.  In the first case, $(-1)^{\tilde \sigma_R}=-1 $ while in the second case $(-1)^{\tilde \sigma_R}=1 $. In both cases $A'=\overline A$ and $B'=\overline B$, so what is left to compute in each sum is $S^{(R_2)}_4 (B\cup \{x\})$ and $S^{(R_3)}_4 (A)$.
 When $R_2=\{1\}$,  $S^{(R_2)}_4 (B\cup \{x\})=1$ and $S^{(R_3)}_4 (A)=-3a$, while when $R_2=\{2\}$, $$S^{(R_2)}_4 (B\cup \{x\})=\frac{\det\left(\begin{array}{ccc}b^3 & 3b^2 & x^3\\
 b & 1& x\\1& 0 &1\end{array}\right)}{\det\left(\begin{array}{ccc}b^2&2b&x^2\\b&1&x\\1&0&1\end{array}\right)}=x+2b,$$ and $S^{(R_3)}_4 (A)=1$.\\
 We finally get that the expression in the right-hand side equals
 $ 3a -x-2b$ which coincides with what is expected.
 \end{example}

\section{Comparisons with previous results}\label{s4}

In this paper we have succeeded in defining an expression in roots with multiplicities  $\SylM_{d}(A,B)(x)$ (Definitions \ref{def-bigd} and \ref{SylM}) which extends the classical Sylvester sum $\Syl_{d}(A,B)(x).$  However, in \cite{Syl} Sylvester also introduced the following {\em double sum}: for $0\leq p\leq m,\,0\leq q\leq n$,
{\small \begin{align*}
 \Syl_{p,q}(A,B)(x)&:=\sum_{\substack{A^{\prime
}\subset A,\,B'\subset
B\\|A^{\prime}|=p,\,|B'|=q}}\mathcal{R}(A^{\prime},B')\,
\mathcal{R}(A\backslash A^{\prime},B\backslash
B')\,\frac{\mathcal{R}(x,A^{\prime
})\,\mathcal{R}(x,B')}{\mathcal{R}(A^{\prime},A\backslash A^{\prime
})\,\mathcal{R}(B',B\backslash B')},
\end{align*}}
and showed that if we set $d:=p+q$; for $d\le \min\{m,n\}$ when $m\ne n,$ or $d< m=n$,
\begin{align*}\Syl_{p,q}(A,B)(x)&= (-1)^{p(m-d)}\binom{d}{p} \Sres_{d}(f,g)(x).
 \end{align*}
It would be interesting to produce expressions $\SylM_{p,q}(A,B)(x)$ for general multisets $A$ and $B$, which specialize to the above  double sums in the case of $A$ and $B$ being sets. Some extensions have been described in \cite[Section 4.2]{Val17} for the case $p$ and $q$ ``large enough'', but still more work has to be done in this direction.

\medskip
Recently, several explicit formulas ``in roots'' for univariate subresultants with multiplicities have been presented. We describe some of them and show that in all the cases, our $\SylM_{d}(A,B)(x)$ essentially produces new formulas.

\subsection{$\overline{m}=\overline{n}=1$}
In \cite{BDKSV17}, the authors, in a joint work with Alin Bostan, developed a formula for the subresultants in the extremal case when both polynomials $f$ and $g$ have only one (multiple) root each, that is when $f=(x-a)^m$ and $g=(x-b)^n$. More precisely,

\begin{theorem} \label{old} (\cite[Th.1.1, Th.1.2]{BDKSV17}) Let $m, n, d\in \N$ with $0\le d < \min\{m,n\}$, and $a, b\in \mathbb{K}$. Then
{  \begin{align*}
\Sres_d&((x-a)^m,(x-b)^n)(x)\\ &={(-1)}^{\binom{d}{2}}(a-b)^{(m-d)(n-d)} \sum_{j=0}^d q_j(m,n,d)(x-a)^j(x-b)^{d-j},
\end{align*}}
where the coefficients $q_0(m,n,d), \ldots, q_d(m,n,d)$ satisfy
\begin{align*}
q_0(m,n,d)&=(-1)^{\binom{d}{2}} \displaystyle{\prod_{i=1}^{d}}\dfrac{(i-1)!\,(m+n-d-i-1)!}{(m-i-1)!(n-i)!},\\
 q_j(m,n,d)&=
\frac{\binom{d}{j}\binom{n-d+j-1}{j}}{\binom{m-1}{j}} \, q_0(m,n,d)  \ \mbox{ for } \  1\le j\le d.\end{align*}
\end{theorem}

Comparing the expression above with the formula given in Theorem \ref{smalld} applied to the case $\overline{m}=\overline{n}=1,$ we get the following result:

\begin{proposition}\label{prop} Let $m, n, d\in \N$ with $1\le d < \min\{m,n\}$, and $a, b\in \mathbb{K}$. Let $A=\{\underbrace{a,\dots,a}_{m}\}  $ and $B=\{\underbrace{b,\dots,b}_{n}\} $. Then

{\small\begin{align*}\Sres_d&((x-a)^m,(x-b)^n)(x)\\ & =(a-b)^{n-1}(x-b)\mathcal{S}_{0}(d)+
(a-b)^{m-1}(x-a)\mathcal{S}_1(d)+
(x-a)(x-b)\mathcal{S}_2(d),\end{align*}}
where
\begin{itemize}
{\small\item $\mathcal{S}_0(d):=\displaystyle\sum_{\substack{R_2\cup R_3=\{1,\dots,m+n-2d-1\}\\
|R_2|=m-d-1,|R_3|=n-d}}(-1)^{\sigma_R} S_{m+n-d}^{(R_2)}(\{a\}\cup B)S_{m+n-d}^{(R_3)}(A),$
\item $\mathcal{S}_1(d):=\displaystyle\sum_{\substack{R_2\cup R_3=\{1,\dots,m+n-2d-1\}\\
|R_2|=m-d,|R_3|=n-d-1}}(-1)^{\sigma_R} S_{m+n-d}^{(R_2)}(B)S_{m+n-d}^{(R_3)}(A\cup\{b\}),$
\item  $\mathcal{S}_2(d)=0$ for $d=1$, and for $d>1$,
\begin{align*}
\mathcal{S}_2(d):=\displaystyle\sum_{i=1}^{d-1}&\,\,\,\displaystyle\sum_{\substack{R_2\cup R_3=\{1,\dots,m+n-2d-1\}\cup\{m+n-2d-1+i\}\\
|R_2|=m-d,|R_3|=n-d}}\\
&(-1)^{\sigma_R}S_{d+1}^{(\{1,\dots,d-1\}\backslash\{i\})}(\{a,b,x\}) S_{m+n-d}^{(R_2)}(B)S_{m+n-d}^{(R_3)}(A).\end{align*}}
\end{itemize}
\end{proposition}

As an example, for $m=3$, $n=2$, $\overline{m}=\overline{n}=1$ and $d=1$, we get from Theorem \ref{smalld} that
\begin{align*}
\Sres_1((x-a)^3,(x-b)^2)(x)&=(a-b)(x-b)(3a +\frac{-a^2-ab+2b^2}{a-b})\\ &\qquad \qquad \qquad +(a-b)^2(x-a)\\
&=(a-b)^2(2(x-b)+(x-a)),\end{align*}
which is consistent with the values $q_0(3,2,1)=2$ and $q_1(3,2,1)=1$ obtained in Theorem~\ref{old}.

Note however that the subresultant in Theorem \ref{old} is expressed as a linear combination of the family $(x-a)^j(x-b)^{d-j}$ while in Proposition \ref{prop} we get a combination of different powers $(x-a)^i(x-b)^j$ for $i+j\leq d$. This shows that these expressions are different except in the case where $d=1$ which appears below.

\medskip
\subsection{$d=m-1<n$}
In \cite{DKS2013}, the first three authors of this paper developed a general formula for the cases $d=m-1<n.$ Indeed, Proposition 2.6 in  \cite{DKS2013} states that for
$$f=(x-a_1)^{j_1}\cdots (x-a_{\overline m})^{j_{\overline m}},$$
with $j_1+\dots +j_{\overline m}=m$,
$\Sres_{m-1}(f,g)$ is the unique Hermite interpolant $h$ of degree $\le m-1$ satisfying the $m$ conditions
$$h^{(k_i)}(a_i)=g^{(k_i)}(a_i), \ 0\le k_i< j_i,  \ 1\le i\le \overline m .$$

We observe that the  formula described in Theorem~\ref{alld2}, expressed in terms of roots of $f$ {\em and} $g$, gives an alternative --though completely different and not at all obvious-- description of the Hermite interpolant $h$.  For instance, thanks to symmetric interpolation, we can check how  the polynomial $\SylM_{m-1}(A,B)(x)$ described in Definition~\ref{def-bigd}  satisfies (at least) the conditions $\SylM_{m-1}(A,B)(a_i)=(-1)^{m-1}g(a_i)$ for all $a_i\in \overline A$ when $n-\overline n \le \overline m-1$: from its definition,

{\small   {\begin{align*} &\SylM_{m-1}(A,B)(a_i)=\\
  &
   (-1)^{m-\overline m}
\sum_{\substack{B'\subset \overline B\\|B'|=m-\overline m }} \frac{\mathcal{R}(A\backslash \overline A, \overline B\backslash B')\mathcal{R}(a_i,  B\backslash B')\prod_{j\ne i}(a_i-a_j)\mathcal{R}(a_i,B')}{ \prod_{j\ne i}(a_j-a_i)\mathcal{R}(  B', \overline B\backslash B')
}\\ & = (-1)^{m-1} \mathcal{R}(a_i,  B')\sum_{\substack{B'\subset \overline B\\|B'|=m-\overline m }} \frac{\mathcal{R}(A\backslash \overline A, \overline B\backslash B')}{\mathcal{R}(  B', \overline B\backslash B')
}\\ & = (-1)^{m-1} g(a_i),\end{align*}}}
since by symmetric interpolation (Prop.~\ref{Lagr}), one has for $X=(x_1,\dots,x_{m-\overline m})$,
$$1= \sum_{\substack{B'\subset \overline B\\|B'|=m-\overline m }} \frac{\mathcal{R}(X, \overline B\backslash B')}{\mathcal{R}(  B', \overline B\backslash B')}= \sum_{\substack{B'\subset \overline B\\|B'|=m-\overline m }} \frac{\mathcal{R}(A\backslash \overline A, \overline B\backslash B')}{\mathcal{R}(  B', \overline B\backslash B')}.$$

\medskip
\subsection{$d=1$}
In the same paper \cite{DKS2013}, an explicit formula for $\Sres_1(f,g)(x)$ in terms of their multiple roots is given. To be more precise,  \cite[Thm.~2.7.]{DKS2013} states:
\begin{align*}
\Sres_1(f,g)(x)&=  (-1)^m g(a_1)\Big(\prod_{{2\le k\le m-1}}\frac{g(a_k)}{a_1-a_k}\Big)(x-a_1)\cdot \\ & \cdot \Big( \sum_{2\le k\le m-1}\frac{1}{a_1-a_k} + \frac{2}{a_1-b_1}+\sum_{2\le k\le n-1} \frac{1}{a_1-b_k} + 1\Big) \\ &+(-1)^{m-1}\sum_{2\le i\le m-1}\frac{g(a_1)^2}{(a_i-a_1)^2}\Big(\prod_{\substack{2\le k\le m-1\\
k\ne i}}\frac{g(a_k)}{a_i-a_k}\Big)(x-a_i),\end{align*}
which is an expression in the roots of $f$ and their values in $g$. On the other hand, Theorem~\ref{alld2} for $d=1$ and $m':=m- \overline m>0$, $n':=n- \overline n>0$, gives
{
\begin{align*}\Sres_1(f,g)(x)&=(-1)^{m-1}\Big(\mathcal{R}(\overline{A},B\backslash\overline{B})
\sum_{b\in\overline{B}}\frac{\mathcal{R}(A,\overline{B}\backslash b)}
{\mathcal{R}(b,\overline{B}\backslash b)}(x-b)\,\mathcal{T}_1 \\ & \quad
+\mathcal{R}(A\backslash\overline{A},\overline{B})
\sum_{a\in\overline{A}}\frac{\mathcal{R}(\overline{A}\backslash a,B)}
{\mathcal{R}(a,\overline{A}\backslash a)}(x-a)\,\mathcal{T}_2\Big),\end{align*}}
where
\begin{itemize}
{\small\item $\mathcal{T}_1:=\displaystyle\sum_{\substack{R_2\cup R_3=\{1,\dots,m'+n'-1\}\\
|R_2|=m'-1,|R_3|=n'}}(-1)^{\sigma_R} S_{m+n-1}^{(R_2)}(\overline{A}\cup B)
S_{m+n-1}^{(R_3)}(A\cup(\overline{B}\backslash b)),$
\item $\mathcal{T}_2:=\displaystyle\sum_{\substack{R_2\cup R_3=\{1,\dots,m'+n'-1\}\\
|R_2|=m',|R_3|=n'-1}}(-1)^{\sigma_R} S_{m+n-1}^{(R_2)}((\overline{A}\backslash a)\cup B)S_{m+n-1}^{(R_3)}(A\cup\overline{B}),$}
\end{itemize}
which are expressions in {\em both} the roots of $f$ and $g$. Note also that the first formulation is given as a linear combination of $(x-a),\,a\in A$ and constants, while the second one is a linear combination of $(x-a),\,a\in A$ {\em and} $(x-b),\,b\in B$. So their presentation is not the same.

\bigskip
\bibliographystyle{alpha}

\begin{thebibliography}{BDKSV2017}


\bibitem[ApJo2006]{ApJo06}
F. Ap\'ery, J.-P. Jouanolou.
\newblock{\em R\'esultant et sous-r\'esultants:
le cas d'une variable avec exercices corrig\'es.\/}
Hermann, Paris 2006. 477 p. (based on Cours DESS 1995-1996).

\bibitem[BDKSV2017]{BDKSV17}
A. Bostan, C. D'Andrea, T. Krick, A. Szanto, M. Valdettaro.
\newblock{\em Subresultants in multiple roots: An extremal case.\/}
\newblock{Linear Algebra Appl. 529 (2017), 185-198.}

\bibitem[Bor1860] {Bor60} C.W.~Borchardt.
\newblock {\"Uber eine Interpolationsformel f\"ur eine Art Symmetrischer
Functionen und \"uber Deren Anwendung}.
\newblock {\em Math. Abh. der Akademie der Wissenschaften zu Berlin}, pages
1--20, 1860.


\bibitem[Cha1990] {Cha90}M.~Chardin. \newblock Th\`ese. \newblock {\em Universit\'e Pierre
et Marie Curie (Paris VI)}, 1990.

\bibitem[ChLo1996]{CL96}{W. Y. Chen, J.D. Louck.}
     \newblock{\em Interpolation for symmetric functions.\/},
  \newblock{Adv. Math.  {117} (1996), no. 1,  147--156.}

\bibitem[DHKS2007]{DHKS07}
C. D'Andrea, H.  Hong, T. Krick, A. Szanto.
\newblock{\em  An elementary proof of Sylvester's double sums for subresultants.\/}
\newblock  J. Symbolic Comput.  42  (2007),  no. 3, 290--297.

\bibitem[DKS2013]{DKS2013}
C. D'Andrea, T. Krick, A. Szanto.
\newblock{\em Subresultants in Multiple Roots.}
\newblock Linear Algebra Appl. 438 (2013), no.5,  1969-1989.

%\bibitem[DKSV2016]{DKSV2016}
%C. D'Andrea, T. Krick, A. Szanto and M. Valdettaro.
%\newblock{\em Subresultants in multiple roots: an extremal case.}
%\newblock {\tt arxiv.org/abs/1608.03740}

\bibitem[DTGV2004] {DTGV04}G.M.~Diaz-Toca, L.~Gonzalez-Vega.
\newblock{\em  Various new expressions for subresultants and their applications.}
\newblock {\em Appl. Algebra Eng. Commun. Comput.}, 15(3--4):233--266, 2004.

\bibitem[Hon1999] {Hon99}H.~Hong. \newblock{\em Subresultants in roots.}
\newblock Technical report, Department of Mathematics. North Carolina State
University, 1999.

\bibitem[Kal1984]{Kal1984}
D. Kalman.
\newblock{\em The generalized Vandermonde matrix.\/}
\newblock Math. Mag.  57  (1984),  no. 1, 15--21.



\bibitem[KS2001]{KS01}
T. Krick, A. Szanto.
\newblock{\em Sylvester's double sums: an inductive proof of the general case.\/}
\newblock  J. Symbolic Comput.   47 (2012) 942--953.


\bibitem[KSV2017]{KSV16}
T. Krick, A. Szanto, M. Valdettaro.
\newblock{\em Symmetric interpolation, Exchange Lemma and  Sylvester sums.\/}
\newblock  Communications in Algebra  {\bf 45}, Issue 8 (2017) 3231-3250. DOI: 10.1080/00927872.2016.1236121



\bibitem[LaPr2001] {LaPr01}A.~Lascoux and P.~Pragacz.
\newblock Double sylvester sums for euclidean division, multi-Schur functions.
\newblock {\em Journal of Symbolic Computation}, (35):689--710, 2003.

\bibitem[Map2016]{maple}
Maple 2016.
\newblock{\em Maplesoft, a division of Waterloo Maple Inc., Waterloo, Ontario.\/}

\bibitem[RoSz2011]{RoSz11}
M.-F.~Roy, A.  Szpirglas.
\newblock{Sylvester double sums and subresultants.}
\newblock {\em J. Symbolic Comput.} 46:385--395.

\bibitem[Syl1839]{sylv39}
J. J. Sylvester.
\newblock {\em On rational derivation from equations of coexistence, that is to say, a new and extended theory of elimination.\/}
\newblock Philos. Mag. 15 (1839), 428--435.
\newblock Also appears in the Collected Mathematical Papers
          of James Joseph Sylvester, Vol.~{\bf 1},
Chelsea Publishing Co. (1973), 40–-46.


\bibitem[Syl1840]{sylv40}
J. J. Sylvester.
\newblock {\em A method of determining by mere inspection the derivatives from two equations of any degree.\/}
\newblock Philos. Mag. 16 (1840), 132--135.
\newblock Also appears in the Collected Mathematical Papers
          of James Joseph Sylvester, Vol.~{\bf 1},
 Chelsea Publishing Co. (1973), 54–-57.



\bibitem[Syl1840b]{sylv40b}
J. J. Sylvester.
\newblock {\em Note on elimination.\/}
\newblock Philos. Mag. 17 (1840), no. 11, 379--380.
\newblock Also appears in the Collected Mathematical Papers
          of James Joseph Sylvester, Vol.~{\bf 1},
 Chelsea Publishing Co. (1973), p.~58.


%


\bibitem[Syl1853]{Syl}
J.J. Sylvester.
\newblock {\em On a theory of syzygetic relations of two rational integral
functions, comprising an application to the theory of Sturm's
function and that of the greatest algebraical common measure.\/}
\newblock Philosophical Transactions of the Royal Society of London, Part III
(1853), 407--548.
\newblock Appears also in Collected Mathematical Papers
          of James Joseph Sylvester, Vol. {\bf 1},
 Chelsea Publishing Co. (1973)  429--586.

 \bibitem[Val17]{Val17}
 M.~Valdettaro.
 \newblock{\em F\'ormulas en ra\'ices para las subresultantes.\/}
 \newblock Ph.D. Thesis, University of Buenos Aires, 2017.
{\tt http://cms.dm.uba.ar/academico/carreras/doctorado/tesis-Valdettaro.pdf}

\end{thebibliography}
\def\cprime{$'$} \def\cprime{$'$} \def\cprime{$'$}

\bigskip
\appendix
\section{}\label{s5}
\subsection{Proof of Proposition~\ref{ApeJou}}\label{s51}

The proof follows from a suitable extension of the next Exchange Lemma that appears in  \cite[Lem.3.1 \& Cor.3.2]{KSV16}.

\begin{lemma}\label{exch}  Set $d\ge 0$. Let $A,\, B\subset \K$ be finite sets with $|A|, |B|\ge d$,  and  $X$ a set of variables with  $|X|\le |A|-d$.   Then

\[\sum_{{A'\subset  A,
|A'|=d}}\mathcal{R}(A\backslash A^{\prime},B)\frac{\mathcal{R}(X,A^{\prime})}{\mathcal{R}(A\backslash A^{\prime},A^{\prime})}= \sum_{B^{\prime}\subseteq B,
|B^{\prime}|=d}\mathcal{R}(A,B\backslash B^{\prime})\frac{\mathcal{R}(X,B^{\prime})}{\mathcal{R}(B', B\backslash B^{\prime})} .\]
\end{lemma}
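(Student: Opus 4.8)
The plan is to prove this Exchange Lemma by a polynomial-identity / interpolation argument, viewing both sides as polynomials in the auxiliary variables $X=\{x_1,\dots,x_k\}$ (with $k=|X|\le|A|-d$) and in the parameters of $A$ and $B$. First I would observe that both sides are symmetric in the variables of $X$, and that each side, as a function of $X$, is a polynomial of degree exactly $d$ in each $x_i$ (on the left, $\mathcal{R}(X,A')$ contributes degree $d$ per variable since $|A'|=d$; on the right, $\mathcal{R}(X,B')$ likewise). So it suffices to check the identity on enough interpolation nodes: I would specialize the variables of $X$ successively to elements of the set $A$ (or a sufficiently large auxiliary set). The key mechanism is that when one of the $x_i$ is set equal to some $a\in A$, the only surviving terms on the left are those with $a\in A'$, and the Lagrange-type denominator $\mathcal{R}(A\setminus A',A')$ collapses things nicely; this is exactly the ``symmetric interpolation'' idea the paper attributes to \cite{KSV16}.

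Concretely, the cleaner route I would take is induction on $d$. For the base case $d=0$ both sides equal $\mathcal{R}(A,B)$ (the $X$-factors are empty products equal to $1$, and there is a single subset, the empty one). For the inductive step, I would peel off one variable $x_1$ from $X$: write $\mathcal{R}(X,A')=\mathcal{R}(x_1,A')\,\mathcal{R}(X\setminus\{x_1\},A')$ and similarly on the right. Then I would use the classical partial-fraction / Lagrange interpolation identity — for a set $A'$ of size $d$,
\[
\frac{\mathcal{R}(x_1,A')}{\mathcal{R}(A\setminus A',A')}
\]
can be re-expanded by grouping over which element $a\in A'$ is ``removed'', producing the recursion that drops $d$ to $d-1$ while moving one element of $A$ into the role of an evaluation point. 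The symmetric manipulation on the $B$-side then matches term by term, and the inductive hypothesis (applied with $d-1$, the reduced $A$, and the reduced $X$) closes the argument.

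Alternatively — and this may in fact be the shortest path, since the paper is going to bootstrap Proposition~\ref{ApeJou} from this lemma — I would prove both sides equal the common ``mixed'' expression
\[
\sum_{\substack{C'\subset A\cup B,\ |C'|=d}} (\text{sign})\,\frac{\mathcal{R}(\text{stuff})\,\mathcal{R}(X,C')}{\mathcal{R}(C',(A\cup B)\setminus C')}
\]
restricted appropriately; but this essentially reproves the statement and is circular unless organized as the interpolation argument above, so I would stick with the induction. The main obstacle I anticipate is bookkeeping the sign and the denominator factors correctly: one must check that the degree count $|X|\le|A|-d$ (equivalently, that $X$ has few enough variables) is exactly what guarantees that the interpolation has enough nodes and that no extra ``polynomial part'' is lost — i.e., that the rational expressions on both sides are genuinely polynomials in $X$ of the claimed degree, with no hidden contribution at infinity. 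Verifying this degree bound and the vanishing of leading terms is the delicate point; the rest is a routine but careful Lagrange-interpolation computation.
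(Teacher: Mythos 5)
Your first paragraph does identify the right mechanism --- this lemma is exactly the symmetric-interpolation statement of Proposition~\ref{Lagr} (the paper itself does not reprove it but imports it from \cite{KSV16}, where the argument is precisely ``check both sides on the nodes $X=A\setminus A_0'$''). However, as written your plan has concrete problems. First, the evaluation logic is reversed: if $x_1$ is set equal to $a\in A$, the factor $\mathcal{R}(X,A')$ \emph{kills} every term with $a\in A'$, so the surviving terms are those with $a\notin A'$; the whole point of specializing $X$ to $A\setminus A_0'$ (all $|A|-d$ variables at once) is that the single surviving term on the left is $A'=A_0'$, giving the value $\mathcal{R}(A\setminus A_0',B)$. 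Second, you never address the other half of the verification, which is where the actual content sits: one must show the right-hand side also evaluates to $\mathcal{R}(A\setminus A_0',B)$ at $X=A\setminus A_0'$, and after factoring this reduces to the separate identity $\sum_{B'\subset B,\,|B'|=d}\mathcal{R}(A_0',B\setminus B')/\mathcal{R}(B',B\setminus B')=1$, i.e.\ the interpolation of the constant polynomial $1$ (Proposition~\ref{Lagr} with $E=B$ and parameter $|B|-d$); this is exactly where the hypothesis $|B|\ge d$ enters, and your plan never uses that hypothesis. Third, a word is needed for $|X|<|A|-d$ strict, since the basis of Proposition~\ref{Lagr} requires $|X|=|A|-d$ exactly (e.g.\ prove the full case and extract the top coefficient in the extra variables).

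The route you actually commit to --- induction on $d$ by ``peeling off $x_1$'' and applying ``the classical partial-fraction identity'' to $\mathcal{R}(x_1,A')/\mathcal{R}(A\setminus A',A')$ --- does not work as described: that expression is a \emph{polynomial} of degree $d$ in $x_1$ over a denominator not involving $x_1$, so there is no partial-fraction decomposition to invoke, and removing a variable from $X$ does not decrease $d$, so no recursion in $d$ is produced. The inductive step is therefore missing, not merely unwritten. I would drop the induction and carry out the interpolation argument of your first paragraph in full, with the two corrections above.
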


Lemma~\ref{exch}   turns out to be  a consequence of the symmetric interpolation developed in \cite{CL96} (see also \cite{KSV16}) that we state here as we will need it for the proof of Lemma~\ref{exchsymII}.

\begin{proposition} \label{Lagr}  Let $E\subset \K$ be a finite set of size $|E|=e$. Set $0\le d<e,$  and let
$X$ be a set of variables with  $|X|=e-d$. Then, \[ {\mathcal B}: =\big\{\mathcal{R}(X,E')\,;\, E'\subseteq E, |E'|=d \big\}\]
is a basis of the $\K$-vector space $S_{(e-d,d)}$ of symmetric polynomials $h$ in $X=\{x_1,\dots, x_{e-d}\}$ over $\K$  such that $\deg_{x_i}(h)\le d$ for  all $1\le i\le e-d$. \\Moreover, any polynomial $h(X)\in S_{(e-d,d)}$ can be uniquely written  as
\[h(X) = \sum_{{ E'\subseteq E,
|E'|=d}} h(E\backslash E') \frac{\mathcal{R}(X,E')}{{\mathcal{R}(E\backslash
E^{\prime }, E^{\prime})}}\]
where $h(E\backslash E^{\prime}):=h(e_1,\dots,e_{e-d})$ for $E\backslash
E^{\prime }=\{e_1,\dots,e_{e-d}\}$.
\end{proposition}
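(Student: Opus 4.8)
The plan is to reduce the whole statement to one transparent computation, namely the evaluation of the proposed basis elements at the natural interpolation points $E\backslash E'$. Write $k:=e-d$, so $X=\{x_1,\dots,x_k\}$ and every $E\backslash E'$ with $|E'|=d$ has exactly $k$ elements. First I would record the two elementary facts needed for a dimension argument. Each $\mathcal{R}(X,E')=\prod_{i=1}^{k}\prod_{y\in E'}(x_i-y)$ is the product over the variables of the single univariate polynomial $t\mapsto\prod_{y\in E'}(t-y)$, hence it is symmetric in $X$ and satisfies $\deg_{x_i}=d$ for each $i$; thus $\mathcal{B}\subseteq S_{(e-d,d)}$. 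On the other hand, the monomial symmetric polynomials $m_\lambda$ indexed by partitions with at most $k$ parts form a basis of the symmetric polynomials in $k$ variables, and since the largest exponent occurring in $m_\lambda$ is $\lambda_1$, the degree constraint $\deg_{x_i}h\le d$ selects exactly those with $\lambda_1\le d$. Therefore $\dim_\K S_{(e-d,d)}$ equals the number of partitions fitting in a $k\times d$ box, which is $\binom{k+d}{d}=\binom{e}{d}=|\mathcal{B}|$. It thus suffices to prove that $\mathcal{B}$ is linearly independent.

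The crucial step is the evaluation computation. For two $d$-subsets $E',E''\subseteq E$, specializing $X$ to the $k$ elements of $E\backslash E''$ gives
\[\mathcal{R}(E\backslash E'',\,E')=\prod_{z\in E\backslash E''}\ \prod_{y\in E'}(z-y),\]
which vanishes exactly when $(E\backslash E'')\cap E'\ne\emptyset$, that is, when $E'\not\subseteq E''$. As $|E'|=|E''|=d$, this fails only for $E'=E''$, and in that case the product runs over pairs of distinct elements of $E$ and is nonzero. Hence the square matrix $\big(\mathcal{R}(E\backslash E'',E')\big)_{E'',E'}$ is diagonal with nonzero diagonal entries. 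Consequently, evaluating a relation $\sum_{E'}c_{E'}\mathcal{R}(X,E')=0$ at $X=E\backslash E''$ yields $c_{E''}\,\mathcal{R}(E\backslash E'',E'')=0$, whence $c_{E''}=0$ for every $E''$. This proves linear independence, and together with the dimension count, that $\mathcal{B}$ is a basis of $S_{(e-d,d)}$.

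The interpolation formula then falls out of the same diagonal structure, so no separate argument is required. Given $h\in S_{(e-d,d)}$, expand it in the basis as $h=\sum_{E'}c_{E'}\mathcal{R}(X,E')$ and evaluate at $X=E\backslash E''$; diagonality gives $h(E\backslash E'')=c_{E''}\,\mathcal{R}(E\backslash E'',E'')$, so $c_{E''}=h(E\backslash E'')/\mathcal{R}(E\backslash E'',E'')$, which is precisely the claimed expansion, and uniqueness is immediate since $\mathcal{B}$ is a basis. I expect the only delicate point to be the dimension count, which relies on the description of $S_{(e-d,d)}$ via monomial symmetric functions together with the standard enumeration of partitions in a rectangle; by contrast the diagonalization of the evaluation matrix, which is where all the real leverage lies, is essentially a one-line observation.
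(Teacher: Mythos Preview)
Your argument is correct. Note, however, that the paper does not actually supply a proof of this proposition: it is quoted from \cite{CL96} (see also \cite{KSV16}) and used as a black box. Your write-up therefore fills in something the paper omits, and does so by the natural route---showing the evaluation matrix $\big(\mathcal{R}(E\backslash E'',E')\big)_{E'',E'}$ is diagonal with nonzero entries, which simultaneously gives linear independence and the explicit Lagrange-type coefficients, while the dimension count via partitions in a $k\times d$ rectangle closes the argument. The only point one might want to make slightly more explicit is that distinct $m_\lambda$ have disjoint monomial supports, so a symmetric polynomial with $\deg_{x_i}\le d$ genuinely cannot involve any $m_\lambda$ with $\lambda_1>d$; you allude to this but it is the one place a skeptical reader might pause.
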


Our next extension of Lemma~\ref{exch} relaxes slightly the condition on the size of $X$. Item (2) is presented for sake of completeness, we do not use it in the sequel.
\begin{lemma} \label{exchsymII}
Set $d\ge 0$. Let $A,B\subset \K$ be  finite sets  with  $|A|\ge d,$ and
$X$ be a set of variables with $|X|\le |A|+ |B|-2d$. Then
\begin{enumerate}
\item If $|B|\ge d$, then
\begin{align*}\sum_{\substack{A_1\sqcup A_2 = A\\
|A_1|=d, |A_2|=|A|-d}}&\frac{\mathcal{R}(A_2,B)\mathcal{R}(X,A_1)}{\mathcal{R}(A_1,A_2)}=\\ & = (-1)^{d(|A|-d)}
\sum_{\substack{B_1\sqcup B_2=B\\
|B_1|=d,\,|B_2|=|B|-d}}\frac{\mathcal{R}(A,B_2)\mathcal{R}(X,B_1)}{\mathcal{R}(B_1,B_2)}.\end{align*}
\item If $|B|<d$, then
\[\sum_{\substack{A_1\sqcup A_2 = A\\
|A_1|=d,\,|A_2|=|A|-d}}\frac{\mathcal{R}(A_2,B)\mathcal{R}(X,A_1)}{\mathcal{R}(A_1,A_2)}=0.\]
\end{enumerate}
\end{lemma}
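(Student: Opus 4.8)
The plan is to prove Lemma~\ref{exchsymII} by reducing it to the symmetric interpolation of Proposition~\ref{Lagr}, in the same spirit as the proof of Lemma~\ref{exch}, but working with a slightly larger variable set. First I would fix notation: write $a:=|A|$, $b:=|B|$, and consider the left-hand side
\[
L(X):=\sum_{\substack{A_1\sqcup A_2=A\\ |A_1|=d,\,|A_2|=a-d}}\frac{\mathcal{R}(A_2,B)\,\mathcal{R}(X,A_1)}{\mathcal{R}(A_1,A_2)}
\]
as a polynomial in the variables $X$. The key structural observation is that $L(X)$ is symmetric in $X$ and has degree at most $d$ in each variable $x_i$ (each summand is a product of $|X|$ linear factors $\mathcal{R}(X,A_1)=\prod_{x\in X,\,a\in A_1}(x-a)$ with $|A_1|=d$), so it lies in the space $S_{(|X|,d)}$. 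Provided $|X|\le a-d$, it is exactly the interpolant of the function $h(Y):=\mathcal{R}(Y,B)$ — a symmetric polynomial of degree $b\ge$ whatever in each variable but evaluated at the $a-d$-subsets $A_2=A\setminus A_1$ — at the node set $A$; this recovers Lemma~\ref{exch}. The new point is that we only need $|X|\le a+b-2d$, which can exceed $a-d$, so the direct interpolation argument at node set $A$ does not apply.

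The device to handle the larger range is to enlarge the set: adjoin $b-d$ generic new elements, i.e. introduce a set $C\subset\K$ with $|C|=b-d$ and $C$ disjoint from $A$, and apply Lemma~\ref{exch} (or Proposition~\ref{Lagr}) with node set $A\cup C$ of size $a+b-2d+d=a+b-d$, which comfortably accommodates $|X|\le a+b-2d$. Concretely I would expand
\[
\mathcal{R}(A\cup C,\,B\setminus B_1)\quad\text{or rather}\quad\mathcal{R}((A\cup C)\setminus Z,\,B)
\]
over $d$-subsets $Z\subset A\cup C$, split $Z=A_1\sqcup C_1$ according to how many elements land in $A$ versus $C$, and observe that the summands with $C_1\ne\emptyset$ must be arranged to cancel or to be controlled. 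Then I would take the specialization $C\to B\setminus B_1$ for a fixed $d$-subset $B_1\subset B$, or better, use Proposition~\ref{Lagr} on the $B$-side: the right-hand side of item~(1),
\[
R(X):=(-1)^{d(a-d)}\sum_{\substack{B_1\sqcup B_2=B\\ |B_1|=d,\,|B_2|=b-d}}\frac{\mathcal{R}(A,B_2)\,\mathcal{R}(X,B_1)}{\mathcal{R}(B_1,B_2)},
\]
is, by the same reasoning with the roles of $A$ and $B$ reversed, the interpolant of $Y\mapsto\mathcal{R}(A,Y)$ at node set $B$, valid when $|X|\le b-d$. Since the bound $|X|\le a+b-2d$ is symmetric in $a$ and $b$ but neither $a-d$ nor $b-d$ alone suffices, the clean route is: show both $L(X)$ and $R(X)$ agree with one and the same explicit symmetric polynomial — the one obtained by Proposition~\ref{ApeJou}-type expansion over an auxiliary set $E$ of size $|X|+d$ containing neither $A$ nor $B$ — by checking that each is the interpolation of that common polynomial restricted to a space $S_{(|X|,d)}$ where uniqueness holds. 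For item~(2), when $b<d$, every term $\mathcal{R}(A_2,B)$ still makes sense but the claim is that the sum vanishes; this follows because $L(X)\in S_{(|X|,d)}$ has too-high vanishing order, or more directly by applying item~(1) with $B$ replaced by $B\cup D$ for a generic $D$ of size $d-b$ and then specializing $D$ back onto $B$ to force collapse — I would instead argue that $L$ interpolates a polynomial that is identically zero because the "values" $\mathcal{R}(A_2,B)$ are those of a polynomial of degree $b<d$ in each of the $a-d$ slots, and when $|X|$ is in range the unique element of $S_{(|X|,d)}$ restricting to those values via the $d$-subset evaluation is $0$.

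Here are the steps in order. \emph{Step 1:} Verify $L(X)\in S_{(|X|,d)}$, i.e. $L$ is symmetric in $X$ and $\deg_{x_i}L\le d$. \emph{Step 2:} In the range $|X|\le a-d$ invoke Proposition~\ref{Lagr} with $e=a$, $E=A$: $L$ is the unique interpolant of $h(Y)=\mathcal{R}(Y,B)$, which is the base case (this is just Lemma~\ref{exch} combined with the analogous identity on the $B$-side, giving $L=R$). \emph{Step 3:} To reach $|X|\le a+b-2d$, pick auxiliary $E\subset\K$ with $|E|=|X|+d$, $E\cap(A\cup B)=\emptyset$; write the Proposition~\ref{ApeJou} right-hand side $T(X)$ over triples $E_1\sqcup E_2\sqcup E_3=E$ with the appropriate cardinalities and show $T(X)\in S_{(|X|,d)}$. \emph{Step 4:} Show $L(X)=T(X)$ by checking they have the same values at the $(a-d)$-node evaluations coming from $A$ — here $|X|\le a+b-2d$ together with the presence of the extra $E_3$ block of size $|E|-m$ is exactly what makes the evaluation map injective on the relevant space, so agreement on enough nodes forces equality; symmetrically $R(X)=T(X)$. \emph{Step 5:} Conclude $L=R$, which is item~(1); for item~(2) run Step 4 with the observation that when $b<d$ the analogue of $T$ is an empty sum, hence $0$.

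The main obstacle I anticipate is Step 4: pinning down the precise node set and degree bounds so that the evaluation map $S_{(|X|,d)}\to\K^{(\text{number of }d\text{-subsets})}$ is injective in exactly the range $|X|\le a+b-2d$, and carefully tracking the sign $(-1)^{d(a-d)}$ and the combinatorial identities (Vandermonde-type splittings of $\mathcal{R}$ over unions $A_1\sqcup C_1$ or $E_1\sqcup E_2\sqcup E_3$) that arise when one partially symmetrizes. The bookkeeping of which summands survive when an auxiliary element of $E$ or $C$ coincides in the limit with an element of $A$ or $B$ is where most of the genuine work lies; everything else is a routine — if slightly lengthy — manipulation of the functions $\mathcal{R}(\cdot,\cdot)$.
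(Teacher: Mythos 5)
Your framework---symmetric interpolation in the space $S_{(|X|,d)}$---is the right one, but the step that actually carries the load is missing. In Step 4 you assert that agreement of $L$ and $T$ ``at the $(a-d)$-node evaluations coming from $A$'' forces equality because the evaluation map is injective on $S_{(|X|,d)}$ in the range $|X|\le a+b-2d$. This is not true as stated: Proposition~\ref{Lagr} determines an element of $S_{(|X|,d)}$ only from its values on the complements of $d$-subsets of a node set of size exactly $|X|+d$, and when $|X|>a-d$ (which is the whole point of the lemma) the set $A$ is too small to serve as that node set --- one cannot even substitute an $(a-d)$-subset $A_2$ for $|X|>a-d$ variables, and by dimension count ($\binom{|X|+d}{d}>\binom{a}{d}$) no evaluation scheme supported on $A$ alone can separate the elements of $S_{(|X|,d)}$. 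The idea that closes this gap, and which your sketch does not contain, is to split $X=Y\sqcup Z$ with $|Y|=a-d$, regard both sides as elements of $S_{(a-d,\,d)}$ over the field $\K(Z)$, interpolate only in $Y$ at the nodes $A_2\subset A$ (where the left-hand sum collapses to the single term with $A_1=A\setminus A_2$), and then dispose of the residual identity
$\sum_{B_1}\mathcal{R}(A_1,B_2)\mathcal{R}(Z,B_1)/\mathcal{R}(B_1,B_2)=\mathcal{R}(Z,A_1)$
by a second application of Lemma~\ref{exch} on the $B$-side, legitimate precisely because $|Z|=|X|-(a-d)\le b-d$. Your alternative device of routing both sides through a ``Proposition~\ref{ApeJou}-type expansion'' is also circular in spirit: in the paper that proposition is deduced \emph{from} the present lemma, so you would have to establish the expansion independently, which again requires the extended exchange identity.

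For item (2) your primary argument is incorrect: the values $\mathcal{R}(A_2,B)$ are generically nonzero, so the interpolant of those values is not the zero polynomial, and ``too-high vanishing order'' does not apply. The route you mention and then set aside --- enlarge $B$ to $B\cup Y$ with $|Y|=d-b$ new variables, apply item (1) (valid since $|X|\le a+b-2d<a-d=|A|+|B\cup Y|-2d$), and extract the coefficient of $y_1^{a-d}\cdots y_s^{a-d}$ --- is the correct one: the right-hand side becomes $\pm\,\mathcal{R}(X,B\cup Y)$, whose degree in each $y_i$ is $|X|<a-d$, so that coefficient vanishes.
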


\begin{proof}  (1) When  $|B|\ge d$, if  $|X|\le |A|-d$ holds, we are in the conditions of Lemma \ref{exch} and the statement holds by simply correcting the sign.

Now assume $|B|\ge d$ and  $r:=|X|>|A|-d$. Write $X=Y\cup Z$, with $Y=\{x_1,\cdots,x_{|A|-d}\}$ and $Z=\{x_{|A|-d+1},\cdots,x_r\}$. We define
\[h(Y,Z)=\sum_{\substack{A_1\sqcup A_2 = A\\
|A_1|=d, |A_2|=|A|-d}}\frac{\mathcal{R}(A_2,B)\mathcal{R}(Y,A_1)\mathcal{R}(Z,A_1)}{\mathcal{R}(A_1,A_2)},\]
and
\[g(Y,Z)=\sum_{\substack{B_1\sqcup B_2=B\\
|B_1|=d,\,|B_2|=|B|-d}}\frac{\mathcal{R}(A,B_2)\mathcal{R}(Y,B_1)\mathcal{R}(Z,B_1)}{\mathcal{R}(B_1,B_2)},\]
and show that $h=(-1)^{d(|A|-d)}g$. For this purpose, we consider $g,h\in \K(Z)[Y]$, i.e. with coefficients in the field $\K(Z).$  Both polynomials are symmetric in $Y$ and have multidegree in $Y$ bounded by $d$. So $h,g\in S_{n-d,d}(\K(Z))$. Using Proposition~\ref{Lagr}, it is enough to verify that $h(A_2,Z)=(-1)^{d(|A|-d)}g(A_2,Z)$, for all $A_2\subseteq A$ with $|A_2|=|A|-d$. Clearly, for a given $A_2$,  $h(A_2,Z)=(-1)^ {d(|A|-d)}\mathcal{R}(A_2,B)\mathcal{R}(Z,A_1)$ where $A_1:=A\backslash A_2$. Let us compute $g(A_2,Z)$:

\begin{align*}g(A_2,Z)&=\sum_{\substack{B_1\sqcup B_2=B\\
|B_1|=d,\,|B_2|=|B|-d}}\frac{\mathcal{R}(A, B_2)\mathcal{R}(A_2,B_1)\mathcal{R}(Z,B_1)}{\mathcal{R}(B_1, B_2)}\\ &=
\mathcal{R}(A_2,B)\sum_{\substack{B_1\sqcup B_2=B\\
|B_1|=d,\,|B_2|=|B|-d}}\frac{\mathcal{R}(A_1,B_2)\mathcal{R}(Z,B_1)}{\mathcal{R}(B_1,B_2)}.\end{align*}
Thus it suffices to show that
\begin{equation}\label{id:z}\sum_{\substack{B_1\sqcup B_2=B\\
|B_1|=d,\,|B_2|=|B|-d}}\frac{{\mathcal{R}(A_1,B_2)}\mathcal{R}(Z,B_1)}{\mathcal{R}(B_1,B_2)}=\mathcal{R}(Z,A_1).\end{equation}
But this   holds again by Lemma \ref{exch} for $B$ instead of $A$, $A_1$ instead of $B$ and $Z$ instead of $X$, since $|Z|=|X|-(|A|-d)\leq |B|-d$ by hypothesis  (in this case the only subset of $A_1$ of size $d$ is $A_1$ itself).

 (2) When $|B|<d$, we enlarge $B$ by adding variables $Y$ so that $|B\cup Y|=d$, say  $Y=\{y_1,\cdots,y_s\}$, with $s=d-|B|$. So we get, by applying the previous item, that
{\small{\begin{align*}&\sum_{\substack{A_1\sqcup A_2 = A\\
|A_1|=d, |A_2|=|A|-d}}\frac{\mathcal{R}(A_2,B)\mathcal{R}(X,A_1)}{\mathcal{R}(A_1,A_2)}=\\ & =  (-1)^{(|A|-d)|Y|}\,\,\coeff_{y_1^{|A|-d}\cdots y_s^{|A|-d}}\left(\sum_{\substack{A_1\sqcup A_2 = A\\
|A_1|=d, |A_2|=|A|-d}}\frac{\mathcal{R}(A_2,B\cup Y)\mathcal{R}(X,A_1)}{\mathcal{R}(A_1,A_2)}\right)\\ & =(-1)^{(|A|-d)|Y|}\,\,\coeff_{y_1^{|A|-d}\cdots y_s^{|A|-d}}\mathcal{R}(X,B\cup Y)\ = \
0 ,\end{align*}}}
\noindent since in this case the hypothesis $|X|\le |A|+|B|-2d$ together with $|B|<d$ implies that $|X|<|A|-d$, and therefore there is no coefficient in $y_i$  of degree $|A|-d$.
\end{proof}

 \noindent
\begin{proof}[Proof of Proposition~\ref{ApeJou}.] Set $e:=|E|$, $m:=|A|$ and $n:=|B|$. The right-hand side of the equality  we want to show can be rewritten as
{\small{\begin{align}&\sum_{\substack{E_1\sqcup E'=E\\
|E_1|=d,\,|E'|=e-d}}\,\,\sum_{\substack{E_2\sqcup E_3=E'\\
|E_2|=m-d,\,|E_3|=e-m}}\frac{\mathcal{R}(A, E_3)\mathcal{R}(E_2,B)\mathcal{R}(X,E_1)}
{\mathcal{R}(E_1,E')\mathcal{R}(E_2,E_3)}
\nonumber\\&=\sum_{\substack{E_1\sqcup E'=E\\
|E_1|=d,\,|E'|=e-d}}\frac{\mathcal{R}(X,E_1)}{\mathcal{R}(E_1,E')}
\sum_{\substack{E_2\sqcup E_3=E'\\
|E_2|=m-d,\,|E_3|=e-m}}\frac{\mathcal{R}(A,E_3)\mathcal{R}(E_2,B)}{\mathcal{R}(E_2,E_3)}
\nonumber\\&=(-1)^{m(e-m)+n(m-d)}\sum_{\substack{E_1\sqcup E'=E\\
|E_1|=d,\,|E'|=e-d}}\frac{\mathcal{R}(X,E_1)}{\mathcal{R}(E_1,E')}
\sum_{\substack{E_2\sqcup E_3=E'\\
|E_2|=m-d,\,|E_3|=e-m}}\frac{\mathcal{R}(E_3,A)\mathcal{R}(B,E_2)}{\mathcal{R}(E_2,E_3)}
\nonumber \\ &= (-1)^{d(e-m)+n(m-d)}  \sum_{\substack{E_1\sqcup E'=E\\
|E_1|=d,\,|E'|=e-d}}\frac{\mathcal{R}(X,E_1)}{\mathcal{R}(E_1,E')}
\sum_{\substack{A_2\sqcup A_1=A\\
|A_2|=m-d,\,|A_1|=d}}\frac{\mathcal{R}(E',A')\mathcal{R}(B,A_2)}
{\mathcal{R}(A_2,A_1)} \label{una} \\
 & =(-1)^{d(e-m)} \sum_{\substack{A_2\sqcup A_1=A\\
|A_2|=m-d,\,|A_1|=d}} \frac{\mathcal{R}(A_2,B)}{\mathcal{R}(A_2,A_1)}\sum_{\substack{E_1\sqcup E'=E\\
|E_1|=d,\,|E'|=e-d}}\frac{\mathcal{R}(E',A_1)\mathcal{R}(X,E_1)}
{\mathcal{R}(E_1,E')}
\nonumber
\\
& =(-1)^{d(m-d)}  \sum_{\substack{A_2\sqcup A_1=A\\
|A_2|=m-d,\,|A_1|=d}} \frac{\mathcal{R}(A_2,B)}{\mathcal{R}(A_2,A_1)}\,
\mathcal{R}(X,A_1) \label{dos}\\
&= \sum_{\substack{A_1\sqcup A_2=A\\
|A_1|=d,\,|A_2|=m-d}} \frac{\mathcal{R}(A_2,B)\mathcal{R}(X,A_1)}{\mathcal{R}(A_1,A_2)}\nonumber,
\end{align}}}
where ~\eqref{una} is Lemma~\ref{exchsymII}(1) applied to  $E'$ instead of $A$,  $A$ instead of $B$ and $B$ instead of $X$   since $|B|\le |E'|+|A|-2(m-d)$, i.e. $n\le e-m+d$ by hypothesis, and ~\eqref{dos} is the same lemma applied to $E$ instead of $A$, $A_1$ instead of $B$ and $X$ since $|X|\le |E|+|A_1|-2d$, i.e. $|X|\le e-d$ by hypothesis (note that in this case,  the only subset of $A_1$ of size $d$ is $A_1$ itself and therefore  the second sum in Lemma~\ref{exchsymII} simply equals $\mathcal{R}(X,A_1)$).
\end{proof}

\subsection{Proof of Theorem~\ref{smalld}}\label{s52}

As in the proof of Theorem~\ref{bigd}, we can assume that $A\cap B=\emptyset$. The idea of the proof is to add an auxiliary set of variables $T=\{t_1,\cdots,t_r\}$ with $r=m'+n' -d$ so that   $E:=\overline A\cup \overline B\cup T$ has size $|E|=m+n-d$, which allows us to apply Proposition~\ref{ApeJou} to $E$ and $X=\{x\}$, and then to compare coefficients in the obtained expression.\\
Applying Proposition~\ref{ApeJou} we get
\[\sum_{\substack{A_1\sqcup A_2 = A\\
|A_1|=d,\\|A_2|=m-d}}\frac{\mathcal{R}(A_2,B)\mathcal{R}(x,A_1)}{\mathcal{R}(A_1,A_2)}=\sum_{\substack{E_1\sqcup E_2\sqcup E_3 = \overline A \cup \overline B\cup T\\
 |E_1|=d, |E_2|=m-d\\
 |E_3|=n-d}}\frac{\mathcal{R}(A,E_3)\mathcal{R}(E_2,B)\mathcal{R}(x,E_1)}{\mathcal{R}(E_1,E_2)
 \mathcal{R}(E_1,E_3) \mathcal{R}(E_2,E_3)}.\]

Again, $\mathcal{R}(A,E_3)=0$ when $E_3\cap A\neq \emptyset$ and $\mathcal{R}(E_2,B)=0 $ when $E_2\cap B\neq\emptyset$. Therefore $E_3\subset \overline B\cup T$ and $E_2\subset \overline A \cup T$. Let us write  $E_2= (\overline A\backslash A')\cup T_2$ with $A'\subset \overline A$ and $T_2\subset T$,  $E_3= (\overline B\backslash B')\cup T_3$ with $B'\subset \overline B$ and $T_3\subset T$ with $T_2\cap T_3=\emptyset$. Then $E_1= (A'\cup B')\cup T_1$ where $T_1=T\backslash (T_2\cup T_3)$, and we can rewrite the sum as we did  in Theorem~\ref{bigd}:

 {\small{\begin{align*}\label{inicio} &\sum_{\substack{T_1\sqcup T_2\sqcup T_3=T\\
|T_1|=r_1, 0\le r_1\le d\\|T_2|=r_2, 0\le r_2\le m-d\\ |T_3|=r_3,0\le r_3\le n-d}}\,\sum_{\substack{A'\subset \overline A\\|A'|=  r_2+d-m'\\0\le |A'|\le \overline m}}
\sum_{\substack{B'\subset \overline B\\|B'|=r_3+d-n'\\0\le |B'|\le \overline n}}\\ &  \frac{\mathcal{R}(A,(\overline B\backslash B')\cup T_3)\mathcal{R}((\overline A\backslash A')\cup T_2,B)\mathcal{R}(x,(A'\cup B')\cup T_1)}{ \mathcal{R}((A'\cup B')\cup T_1, (\overline A\backslash A')\cup T_2)\mathcal{R}( (A'\cup B')\cup T_1, (\overline B\backslash B')\cup T_3)\mathcal{R}((\overline A\backslash A')\cup T_2, (\overline B\backslash B')\cup T_3)
}\\
&=\sum_{\substack{T_1\sqcup T_2\sqcup T_3=T\\
|T_1|=r_1, 0\le r_1\le d\\|T_2|=r_2, \max\{0,m'-d\}\le r_2\le m-d\\ |T_3|=r_3,\max\{0,n'-d\}\le r_3\le n-d}}\,\sum_{\substack{A'\subset \overline A\\|A'|= r_2-(m'-d)}}
\sum_{\substack{B'\subset \overline B\\|B'|=r_3-(n' -d)}}\\ &
 \frac{\mathcal{R}(A,(\overline B\backslash B')\cup T_3)\mathcal{R}((\overline A\backslash A')\cup T_2,B)\mathcal{R}(x,(A'\cup B')\cup T_1)}{ \mathcal{R}((A'\cup B')\cup T_1, (\overline A\backslash A')\cup T_2)\mathcal{R}( (A'\cup B')\cup T_1, (\overline B\backslash B')\cup T_3)\mathcal{R}((\overline A\backslash A')\cup T_2, (\overline B\backslash B')\cup T_3)
}
\end{align*}}}
Here, for each choice of $T_1,T_2,T_3$ and $A',B'$, the numerator equals
$$\mathcal{R}(A,\overline B\backslash B')\mathcal{R}(A, T_3)\mathcal{R}(\overline A\backslash A',B)\mathcal{R}( T_2,B)\mathcal{R}(x,A')\mathcal{R}(x, B')\mathcal{R}(x,T_1),$$
while the denominator can be rewritten as
\begin{align*} &
 \mathcal{R}(A'\cup B', \overline A\backslash A')\mathcal{R}(A'\cup B', T_2)\mathcal{R}( T_1, \overline A\backslash A')\mathcal{R}( T_1,  T_2) \cdot \\ & \quad \cdot\mathcal{R}( A'\cup B', \overline B\backslash B')\mathcal{R}( A'\cup B', T_3)\mathcal{R}( T_1, \overline B\backslash B')\mathcal{R}(  T_1, T_3)\cdot \\ &\quad \cdot \mathcal{R}(\overline A\backslash A', \overline B\backslash B') \mathcal{R}(\overline A\backslash A', T_3) \mathcal{R}(T_2, \overline B\backslash B')\mathcal{R}( T_2, T_3).
\end{align*}
Therefore, the part of the quotient which is  free of $T_\ell$'s equals, as in Theorem~\ref{bigd},
{\small{\begin{align*}
&
= (-1)^{\sigma_1}\sum_{\substack{A'\subset \overline A\\|A'|=r_2-(m'-d) }}
\sum_{\substack{B'\subset \overline B\\|B'|=r_3-(n' -d) }} \frac{\mathcal{R}(A\backslash \overline A, \overline B\backslash B')\mathcal{R}(\overline A\backslash A',B\backslash B')\mathcal{R}(x,A')\mathcal{R}(x,B')}{ \mathcal{R}(A', \overline A\backslash A')\mathcal{R}(  B', \overline B\backslash B')
}\end{align*}}}
with $\sigma_1:=|B'|\, |\overline A\backslash A'|$.\\
We deal now with the part of the quotient that involves some $T_\ell$. Multiplying and dividing by $\mathcal{R}(T_1,A'\cup B')\mathcal{R}(T_2,\overline A\backslash A')\mathcal{R}(T_3,\overline B\backslash B')$, we get that this quotient equals
$$(-1)^{\sigma_2}\frac{\mathcal{R}(T_3,A\cup (\overline B\backslash B'))\mathcal{R}(T_2, (\overline A\backslash A')\cup B)\mathcal{R}(T_1,A'\cup B'\cup x)}
 {\mathcal{R}(T,\overline{A}\cup\overline{B})\mathcal{R}(T_1,T_2)\mathcal{R}(T_1,T_3)\mathcal{R}(T_2,T_3)},$$
 where $\sigma_2:=|T_3|\,|\overline A\backslash A'| + (|T_2|+|T_3|)|A'\cup B'|+|T_3|\,|A| +|T_1|.$\\
Next we multiply and divide by the product of Vandermonde determinants $\det(V(T_1))\det(V(T_2))\det(V(T_3))$, where we consider in each $T_\ell$ the elements $t_i$ with the indices $i$ in increasing order, and get
{\small{\begin{align*}&\frac{\mathcal{R}( T_3,A\cup (\overline B\backslash B'))\mathcal{R}(T_2,(\overline A\backslash A')\cup B)\mathcal{R}(T_1,A'\cup B'\cup x)\det(V(T_1))\det(V(T_2))\det(V(T_3))}
 {\mathcal{R}(T,\overline{A}\cup\overline{B})\mathcal{R}(T_1,T_2)\mathcal{R}(T_1,T_3)
 \mathcal{R}(T_2,T_3)\det(V(T_1))\det(V(T_2))\det(V(T_3))}=\\
&\quad = \sg(T_1,T_2,T_3)\cdot \\
& \qquad \cdot
\frac{\mathcal{R}( T_3,A\cup (\overline B\backslash B'))\mathcal{R}(T_2,(\overline A\backslash A')\cup B)\mathcal{R}(T_1,A'\cup B'\cup x)\det(V(T_1))\det(V(T_2))\det(V(T_3))}
 {\mathcal{R}(T,\overline{A}\cup\overline{B})\det(V(T))},\end{align*}}}
 where $\sg(T_1,T_2,T_3):=(-1)^\sigma$ where $\sigma$ is the parity of the number of transpositions needed to bring the ordered set  $T_1\sqcup T_2 \sqcup T_3 $ to $\{t_1,\dots,t_r\}$.

 Since the denominator is independent of the choices of $T_\ell$, going back to the first expression, we have

{\small{\begin{align*}\mathcal{R}(T,\overline{A}\cup\overline{B})&\det(V(T))\sum_{\substack{A_1\sqcup A_2 = A\\
|A_1|=d,\,|A_2|=m-d}}\frac{\mathcal{R}(A_2,B)\mathcal{R}(x,A_1)}{\mathcal{R}(A_1,A_2)}=\\
&=
\sum_{\substack{T_1\sqcup T_2\sqcup T_3=T\\
|T_1|=r_1, 0\le r_1\le d\\|T_2|=r_2, \max\{0,m'-d\}\le r_2\le m-d\\ |T_3|=r_3,\max\{0,n'-d\}\le r_3\le n-d}}(-1)^{\sigma'}\sg(T_1,T_2,T_3)\,\cdot \\ &\cdot \, \sum_{\substack{A'\subset \overline A\\|A'|=r_2-(m'-d)) }}
\sum_{\substack{B'\subset \overline B\\|B'|=r_3-(n' -d) }} \frac{\mathcal{R}(A\backslash \overline A, \overline B\backslash B')\mathcal{R}(\overline A\backslash A',B\backslash B')\mathcal{R}(x,A')\mathcal{R}(x,B')}{ \mathcal{R}(A', \overline A\backslash A')\mathcal{R}(  B', \overline B\backslash B')
}\cdot \\
 &\cdot \,\mathcal{R}( T_3,A\cup (\overline B\backslash B'))\mathcal{R}(T_2,(\overline A\backslash A')\cup B)\mathcal{R}(T_1,A'\cup B'\cup x)\cdot \\
 & \cdot \, \det(V(T_1))\det(V(T_2))\det(V(T_3)),
\end{align*}}}
where
{\small{\begin{align*}\sigma':&= \sigma_1+\sigma_2\\
&= (|B'|+|T_3|)|\overline A\backslash A'|+   (|T_2|+|T_3|)|A'\cup B'|+|T_3|\,|A| +|T_1|\\
&\equiv (n' -d)(m-d) + r_1 + r_2(m' -d+1) + r_3(n'-\overline m  +1) \pmod 2\\
&\equiv m'(m-d) + r_1(m-d-1) + r_2(\overline m -1) + r_3(m'+n' -d-1) \pmod 2. \end{align*}}}
(The last row is written in  a way that it coincides with the exponent in Theorem~\ref{bigd}, when $r<0$ is interpreted as $r_1=r_2=r_3=0$.)

To recover the sum we are looking for, we take a specific coefficient in $(t_1,\dots,t_r)$ in both sides. Note that the leading coefficient of $\mathcal{R}(T,\overline{A}\cup\overline{B})\det(V(T))$ w.r.t. the lexicographic term order $t_1>\dots > t_r$ equals
$$ \coeff_{t_1^{m+n-d-1}t_2^{m+n-d-2}\cdots t_r^{m+n-d-r}}\left(\mathcal{R}(T, \overline{A}\cup\overline{B})\det(V(T))\right)=1.$$

We now look for  this coefficient on the right hand side of the expression above. We do it by keeping track of the variables $t_i$ that belong to each $T_\ell$.
 We go first after the variables in $T_2$, and then in $T_3$, since they behave similarly. Observe that   \[\mathcal{R}(T_2,(\overline{A}\backslash A')\cup B)\det(V(T_2))=\frac{\det(V(T_2\cup (\overline{A}\backslash A')\cup B))}{\det(V((\overline{A}\backslash A')\cup B))},\]
and
\[\mathcal{R}(T_3,A\cup (\overline{B}\backslash B'))\det(V(T_3))=\frac{\det(V(T_3\cup A\cup (\overline{B}\backslash B')))}{\det(V(A\cup(\overline{B}\backslash B')))},\]
where the matrices in the numerator of the right-hand sides are both  of size $(m+n-d)\times (m+n-d)$.
The coefficient of the monomial $\prod_{t_i\in T_2} t_i^{m+n-d-i}$  corresponds  to the submatrix of $V_{m+n-d}((\overline{A}\backslash A')\cup B)$ where the rows indexed by  $R_2:=\{i:\,t_i\in T_2\}$ have been  erased. Then
{\small{\begin{align*}\coeff_{{{\prod_{t_i\in T_2} t_i^{m+n-d-i}}}}\left(\frac{\det(V(T_2\cup (\overline{A}\backslash A')\cup B))}{\det(V((\overline{A}\backslash A')\cup B))}\right)&=  \sg_{m+n-d}(R_2) S_{m+n-d}^{(R_2)}((\overline{A}\backslash A')\cup B)\\
&=  \sg_{m'+n'-d}(R_2) S_{m+n-d}^{(R_2)}((\overline{A}\backslash A')\cup B)\end{align*}}}
since $R_2\subset \{1,\dots,m'+n'-d\}$.
Analogously
{\small{\[\coeff_{{{\prod_{t_i\in T_3} t_i^{m+n-d-i}}}}\left(\frac{\det(V(T_3\cup A\cup (\overline{B}\backslash B')))}{\det(V(A\cup(\overline{B}\backslash B')))}\right)= \sg_{m'+n'-d}(R_3)S_{m+n-d}^{(R_3)}(A\cup (\overline{B}\backslash B')),\]}}
where $R_3:=\{i: \,t_i\in T_3\}$.

Now we deal with variables in $T_1$. Note that
\[\mathcal{R}(T_1,  A'\cup B'\cup x) \det(V(T_1))=\frac{\det(V(T_1\cup A'\cup B'\cup x))}{\det(V(A'\cup B'\cup x))}.\]
Here the matrix in the numerator is a Vandermonde matrix of size  $(d+1)\times(d+1)$ and the maximal exponent of $t_i$ for $t_i\in T_1$ that can appear equals $t_i^d$.  Set $R_1:=\{i:\, t_i\in T_1\}$. Therefore, for all $i\in R_1$ one needs $m+n-d-i\subset \{0,1,\dots, d\}$, i.e. $m+n-2d\le i\le m+n-d$. Since $i\le r=m'+n' - d,$ we need $m+n-2d\le m'+n' -d$ and $R_1\subset \{m+n-2d,\dots, m'+n' -d\}$.
\\In particular, when  $m+n-2d> m'+n' -d $, i.e. when $d<\overline m+\overline n$ there is no choice of $R_1$. In that case, we conclude

{\small{\begin{align*}&\sum_{\substack{A_1\sqcup A_2 = A\\
|A_1|=d,\\|A_2|=m-d}}\frac{\mathcal{R}(A_2,B)\mathcal{R}(x,A_1)}{\mathcal{R}(A_1,A_2)}\,=\, (-1)^{m'(m-d)} \sum_{\substack{R_2\sqcup R_3=\{1,\dots, m'+n' -d \}\\
|R_2|=r_2, \max\{0,m'-d\}\le r_2\le m-d\\ |R_3|=r_3,\max\{0,n'-d\}\le r_3\le n-d}}(-1)^\sigma\,\sg(R_2,R_3)\cdot \\ & \qquad \cdot \sum_{\substack{A'\subset \overline A\\|A'|= r_2-(m'-d)}}
\sum_{\substack{B'\subset \overline B\\|B'|=r_3-(n' -d)}}  \frac{\mathcal{R}(A\backslash \overline A, \overline B\backslash B')\mathcal{R}(\overline A\backslash A',B\backslash B')\mathcal{R}(x,A')\mathcal{R}(x,B')}{ \mathcal{R}(A', \overline A\backslash A')\mathcal{R}(  B', \overline B\backslash B')}\,\cdot \\
 & \hskip5cm \cdot S_{m+n-d}^{(R_2)}((\overline{A}\backslash A')\cup B) S_{m+n-d}^{(R_3)}(A\cup (\overline{B}\backslash B')),\end{align*}}}
 where $$\sigma:=   r_2(\overline m -1) + r_3(m'+n' -d-1) +r_2r_3,$$
 since it is easy to check that $\sg_{m'+n'-d}(R_2)\sg_{m'+n'-d}(R_3)=(-1)^{r_2r_3} $ as $R_2$ and $R_3$ are complementary sets in $\{1,\dots,m'+n'-d\}$ (or see Lemma~\ref{signs} below).

Now,  when $d\ge \overline m+\overline n$ and $R_1=\{i:\, t_i\in T_1\}\subset \{m+n-2d,\dots, m'+n' -d\}$  we have
\[\coeff_{\prod_{t_i\in T_1} t_i^{m+n-d-i}}\left(\frac{\det(V(T_1 \cup A'\cup B'\cup x))}{\det(V( A'\cup B'\cup x))}\right)=\sg_{d+1}(\widetilde R_1)S_{d+1}^{(\widetilde R_1)} (A'\cup B'\cup x),\]
where $\widetilde R_1:=\{i- (m+n-2d-1):\, i\in R_1\} \subset \{1,\dots,d+1-(\overline m+\overline n)\}$.
We prove in Lemma~\ref{signs} below that $$\sg_{d+1}(\widetilde R_1)\sg_{m'+n'-d}(R_2)\sg_{m'+n'-d}(R_3)=(-1)^{r_1(r_2+r_3+m+n-1)+r_2r_3}.$$
 Therefore we get
{\small{\begin{align*}&\sum_{\substack{A_1\sqcup A_2 = A\\
|A_1|=d,\\|A_2|=m-d}}\frac{\mathcal{R}(A_2,B)\mathcal{R}(x,A_1)}{\mathcal{R}(A_1,A_2)}\, = \, (-1)^{m'(m-d)} \,\cdot \\ & \qquad \cdot  \sum_{\substack{R_1\sqcup R_2\sqcup  R_3=\{1,\dots,m'+n' -d\}\\
 R_1\subset\{m+n-2d , \dots,m'+n' -d\}, \\| R_1|=r_1, 0\le r_1\le d-(\overline m+\overline n )+1\\ |R_2|=r_2, \max\{0,m'-d\}\le r_2\le m-d\\ |R_3|=r_3,\max\{0,n'-d\}\le r_3\le n-d }} (-1)^\sigma\,\sg(R_1,R_2,R_3)\,\\[1mm]& \qquad \cdot \sum_{\substack{A'\subset \overline A\\|A'|=  r_2-(m' -d)}}
\sum_{\substack{B'\subset \overline B\\|B'|= r_3-(n' -d)}}  \frac{\mathcal{R}(A\backslash \overline A, \overline B\backslash B')\mathcal{R}(\overline A\backslash A',B\backslash B')\mathcal{R}(x,A')\mathcal{R}(x,B')}{ \mathcal{R}(A', \overline A\backslash A')\mathcal{R}(  B', \overline B\backslash B')}\,\cdot \\
 & \qquad \qquad \qquad  \cdot \,   S_{d+1}^{(\widetilde R_1)}(A'\cup B'\cup x)\,S_{m+n-d}^{(R_2)}((\overline{A}\backslash A')\cup B)\, S_{m+n-d}^{(R_3)}(A\cup (\overline{B}\backslash B')),\end{align*}}}
where $$\sigma:=  r_1(n-d+r_2+r_3) + r_2(\overline m -1) + r_3(m'+n' -d-1)+r_2r_3.$$

\begin{lemma}\label{signs} Let $R_1\sqcup R_2\sqcup  R_3$ be a partition of  $\{1,\dots,r\}$ with  $|R_i|=r_i$ for $1\le i\le 3$,  and
$0\le s\le r$ be such that $\widetilde R_1=\{i-s:\, i\in R_1\}\subset \{1,\dots, r-s\}$. Then
$$\sg_{r-s}(\widetilde R_1)\,\sg_{r}(R_2)\,\sg_{r}(R_3)=(-1)^{r_1(r_2+r_3+s)+r_2r_3}.$$
\end{lemma}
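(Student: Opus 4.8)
The plan is to reduce everything to one explicit formula for $\sg_r(R)$ as a signed power of $-1$ whose exponent is an explicit quadratic-plus-linear function of the elements of $R$ and of $|R|$, to add up the three such exponents coming from $\widetilde R_1$, $R_2$, $R_3$, and to simplify using that $R_1\sqcup R_2\sqcup R_3$ exhausts $\{1,\dots,r\}$.

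First I would record the basic formula. If $R=\{i_1<i_2<\cdots<i_k\}\subseteq\{1,\dots,r\}$, then bringing $i_1$ to the first slot costs $i_1-1$ adjacent transpositions; after this, only the values $1,\dots,i_1-1$ have been shifted (all to the left of $i_2$), so $i_2$ still occupies slot $i_2$ and costs a further $i_2-2$ transpositions, and inductively moving $i_t$ to slot $t$ costs $i_t-t$. Hence
\[
\sg_r(R)=(-1)^{\,e(R)},\qquad e(R):=\sum_{i\in R}i-\binom{|R|+1}{2}.
\]
In particular $e(R)$, and therefore $\sg_r(R)$, does not depend on the ambient size $r$, as long as $R\subseteq\{1,\dots,r\}$. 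Applying this to $R_2$ and $R_3$ directly, and to $\widetilde R_1=\{i-s:\ i\in R_1\}$ — which, being obtained from $R_1$ by subtracting $s$ from each of its $r_1$ elements, satisfies $e(\widetilde R_1)=e(R_1)-r_1 s$, and which lies in $\{1,\dots,r-s\}$ exactly by the hypothesis on $s$ — I obtain
\[
\sg_{r-s}(\widetilde R_1)\,\sg_r(R_2)\,\sg_r(R_3)=(-1)^{\,e(R_1)+e(R_2)+e(R_3)-r_1 s}.
\]

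Next I would add the three exponents. Since $R_1\sqcup R_2\sqcup R_3=\{1,\dots,r\}$, we have $\sum_{i\in R_1}i+\sum_{i\in R_2}i+\sum_{i\in R_3}i=\binom{r+1}{2}$, so that, with $r=r_1+r_2+r_3$,
\[
e(R_1)+e(R_2)+e(R_3)=\binom{r+1}{2}-\binom{r_1+1}{2}-\binom{r_2+1}{2}-\binom{r_3+1}{2}.
\]
Now I use the elementary identity $\binom{a+b+1}{2}=\binom{a+1}{2}+\binom{b+1}{2}+ab$ (expand both sides), applied twice, to get $\binom{r+1}{2}=\sum_{i=1}^{3}\binom{r_i+1}{2}+\sum_{1\le i<j\le 3}r_ir_j$, hence $e(R_1)+e(R_2)+e(R_3)=r_1r_2+r_1r_3+r_2r_3$. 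The total exponent is therefore $r_1r_2+r_1r_3+r_2r_3-r_1 s\equiv r_1(r_2+r_3+s)+r_2r_3\pmod 2$, which is exactly the asserted sign.

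There is no genuine obstacle here; the argument is pure bookkeeping. The only points that need care are the transposition count defining $\sg_r$ — in particular the observation that after $i_1,\dots,i_{t-1}$ have been moved to the front the element $i_t$ still sits in slot $i_t$, because only values smaller than $i_t$ have been displaced and they have all moved to its left — and the accounting of the shift by $s$ in passing from $R_1$ to $\widetilde R_1$. As a consistency check one recovers from the same formula the fact (used earlier in the proof of Theorem~\ref{smalld}) that $\sg_r(R_2)\,\sg_r(R_3)=(-1)^{r_2r_3}$ whenever $R_2$ and $R_3$ are complementary in $\{1,\dots,r\}$, namely the case $r_1=s=0$.
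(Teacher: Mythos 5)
Your proof is correct and follows essentially the same route as the paper's: both compute the exponent of each sign as $\sum_t(i_t-t)$, sum over the three parts using that $R_1\sqcup R_2\sqcup R_3=\{1,\dots,r\}$, and simplify the resulting quadratic expression in $r_1,r_2,r_3$ modulo $2$. Your write-up is in fact slightly more careful, since you justify the transposition count and the $r$-independence of $\sg_r(R)$, which the paper takes for granted.
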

\begin{proof} We set $R_1=\{i_1,\dots,i_{r_1}\}$, $R_2=\{j_1,\dots,j_{r_2}\}$ and $R_3=\{k_1,\dots,k_{r_3}\}$. Then
{\small \begin{align*}\sg_{r-s}(\widetilde R_1)\,\sg_{r}(R_2)\,\sg_{r}(R_3)&= \sum_{1\le \ell \le r_1}(i_\ell-s-\ell)+\sum_{1\le \ell \le r_2}(j_\ell-\ell)+\sum_{1\le \ell \le r_3}(k_\ell-\ell)\\
&= \frac{r(r+1)}{2} - r_1s - \frac{r_1(r_1+1)}{2}- \frac{r_2(r_2+1)}{2}- \frac{r_3(r_3+1)}{2} \\
&= \frac{r^2-r_1^2-r_2^2-r_3^2}{2} - r_1s\\
& =\frac{r^2-(r_1+r_2+r_3)^2 + 2r_1r_2+2r_1r_3+2r_2r_3 }{2} - r_1s\\
 & \equiv r_1r_2+r_1r_3+r_2r_3 +r_1s\pmod2.
\end{align*}}
\end{proof}

\subsection{Proof of Theorem~\ref{Fd-alld}}\label{s53}

First note that the statement of Theorem~\ref{Fd-alld} implies the expressions  in Theorem \ref{Fd-bigd-2} for the case when $m'+n'\leq d$ since in this case $r_1=r_2=r_3=0$. Here we prove the statement for $G_d(f,g)$,  the statement for $F_d(f,g)$ in  follows from  the identity
$$ F_d(f,g)=(-1)^{(m-d)(n-d)} G_d(g,f).$$
By Lemma~\ref{lemmaGd} and Theorem~\ref{alld2} we have
\begin{equation}\label{Gd3}G_d(f,g)(x)=(-1)^{(d+1)(m-d-1)} \coeff_{y^{d+1}} \big(\SylM_{d+1}(A,B\cup\{x\})(y)\big).\end{equation}
Now we  use Definition \ref{SylM} for $A$ and $B\cup \{x\}$   and  choose  $\overline A\subset A$ and $\overline B\subset B\subset B\cup\{x\}$ as subsets of the sets of distinct roots in $f$ and $\tilde g$ respectively, and get that
{\small \begin{align*}
&\SylM_{d+1}(A,B\cup\{x\})(y)= (-1)^{m'(m-d) }  \cdot \\& \cdot \sum_{\substack{R_1\sqcup R_2\sqcup R_3=\{1,\dots, m' + n' -d \}\\
R_1\subset \{m+n-2d-1,\dots,m'+n'- d\},\\|R_1|=r_1, r_1\le d+2-(\overline m+\overline n)\\|R_2|=r_2,\, m'-d-1\le r_2\le m-d-1\\ |R_3|=r_3,\,n'-d\le r_3\le n-d}}(-1)^{\sigma_{R}}\frac{\mathcal{R}(A\backslash \overline A, \overline B\cup\{x\}\backslash B')\mathcal{R}(\overline A\backslash A',B\cup\{x\}\backslash B')\mathcal{R}(y,A')\mathcal{R}(y,B')}{ \mathcal{R}(A', \overline A\backslash A')\mathcal{R}(  B', \overline B\backslash B')}\,\cdot \\
 & \hskip2cm  \cdot \,  S_{d+2}^{(\widetilde R_1)}(A'\cup B'\cup \{y\})S_{m+n-d}^{(R_2)}((\overline{A}\backslash A')\cup B\cup\{x\})  S_{m+n-d}^{(R_3)}(A\cup (\overline{B}\backslash B')).\end{align*}}
 We have to consider $S_{d+2}^{(\widetilde R_1)}(A'\cup B'\cup\{y\})$ for $R_1\subset \{m+n-2d-1,\dots, m'+n'-d\}$ and $\widetilde R_1=\{i-(m+n-2d-2): i\in R_1\}$.
Since  $\deg_y\big( \mathcal{R}(y,A'\cup B'))=|A'|+|B'| $, we first observe that $$\deg_y\big(S_{d+2}^{(\widetilde R_1)}(A'\cup  B'\cup \{y\})\big)\le d+1-|A'|-|B'|$$ and moreover, if $1\in \widetilde R_1$, i.e, if $m+n-2d-1\in R_1$, then
$$\coeff_{y^{d+1-|A'|-|B'|}}\big(S_{d+2}^{(\widetilde R_1)}(A'\cup  B'\cup \{y\})\big) = 0,$$
 while if
$1\notin \widetilde R_1$, i.e, if $m+n-2d-1\notin R_1$, then
$$\coeff_{y^{d+1-|A'|-|B'|}}\big(S_{d+2}^{(\widetilde R_1)}(A'\cup  B'\cup \{y\})\big) = (-1)^{|A'|+|B'|} S_{d+1}^{(\widetilde R_1)}(A'\cup  B').$$
Therefore the only subsets $R_1$ that produce non-zero terms satisfy  $R_1\subset \{m+n-2d,\dots,m'+n'-d\}$ and for these $R_1$,
$$\coeff_{y^{d+1}}\big( \mathcal{R}(y,A'\cup B') S_{d+2}^{(\widetilde R_1)}(A'\cup  B'\cup \{y\}) \big)= (-1)^{|A'|+|B'|} S_{d+1}^{(\widetilde R_1)}(A'\cup  B').$$
Hence,
\begin{align*}&\coeff_{y^{d+1}}\big(\SylM_{d+1}(A,B\cup\{x\})(y)\big)=(-1)^{m'(m-d-1)}\,\cdot  \\ & \qquad \qquad \cdot \, \sum_{\substack{R_1\sqcup R_2\sqcup R_3=\{1,\dots, m' + n' -d \}\\
R_1\subset \{m+n-2d,\dots,m'+n'- d\},\\|R_1|=r_1, r_1\le d+2-(\overline m+\overline n)\\|R_2|=r_2,\, m'-d-1\le r_2\le m-d-1\\ |R_3|=r_3,\,n'-d\le r_3\le n-d}}(-1)^{\sigma_{R}}\ \cdot \\[1mm]
& \qquad \cdot \sum_{\substack{A'\subset \overline A\\|A'|=r_2-(m' -d-1)}}
\sum_{\substack{B'\subset \overline B\\|B'|=r_3-(n' -d)}}    \frac{\mathcal{R}(A\backslash \overline A, \overline B\backslash B')\mathcal{R}(\overline A\backslash A',B\backslash B')\mathcal{R}(\overline A\backslash A',x)}{ \mathcal{R}(A', \overline A\backslash A')\mathcal{R}(  B', \overline B\backslash B')}\,\cdot \\
 & \hskip2cm \cdot S_{d+1}^{(\widetilde R_1)}( A'\cup B') S_{m+n-d}^{(R_2)}((\overline{A}\backslash A')\cup B\cup \{x\}) S_{m+n-d}^{(R_3)}(A\cup (\overline{B}\backslash B')),\end{align*}
 where for the partition $R_1\sqcup R_2\sqcup R_3=\{1,\dots, m'+n'-d\}$ and $R=(R_1,R_2,R_3)$
   \begin{equation*}(-1)^{\sigma_{ R}}=(-1)^{ r_1(n-d+r_2+r_3)+r_2(\overline m -1) + r_3(m' +n' -d-1) +r_2r_3}\sg({R}).\end{equation*}
We conclude the proof by applying again Identity~\eqref{Gd3} and by permuting $x$ and $\overline A\backslash A'$ in $\mathcal{R}(\overline A\backslash A',x)$:
 \begin{align*}&G_d(f,g)(x)=(-1)^{(d-m')(m-d-1)} \sum_{\substack{R_1\sqcup R_2\sqcup R_3=\{1,\dots, m' + n' -d \}\\
R_1\subset \{m+n-2d,\dots,m'+n'- d\},\\|R_1|=r_1, r_1\le d+2-(\overline m+\overline n)\\|R_2|=r_2,\, m'-d-1\le r_2\le m-d-1\\ |R_3|=r_3,\,n'-d\le r_3\le n-d}}(-1)^{\widetilde \sigma_{R}}\ \cdot \\[1mm]
& \qquad \cdot\sum_{\substack{A'\subset \overline A\\|A'|=r_2-(m' -d-1)}}
\sum_{\substack{B'\subset \overline B\\|B'|=r_3-(n' -d)}}    \frac{\mathcal{R}(A\backslash \overline A, \overline B\backslash B')\mathcal{R}(\overline A\backslash A',B\backslash B')\mathcal{R}(x,\overline A\backslash A')}{ \mathcal{R}(A', \overline A\backslash A')\mathcal{R}(  B', \overline B\backslash B')}\,\cdot \\
 & \hskip2cm \cdot S_{d+1}^{(\widetilde R_1)}( A'\cup B')  S_{m+n-d}^{(R_2)}((\overline{A}\backslash A')\cup B\cup \{x\}) S_{m+n-d}^{(R_3)}(A\cup (\overline{B}\backslash B')),\end{align*}
 where $(-1)^{\widetilde \sigma_R}:=(-1)^{  r_1(n-d+r_2+r_3) + r_2 \overline m  + r_3(m' +n' -d-1) +r_2r_3}\sg({R}) $.

\end{document}